\newcommand{\sr}{{\tfrac12}}
\newcommand{\R}{\mathbb{R}}
\newcommand{\Y}{\mathpzc{Y}}
\newcommand{\N}{\mathpzc{N}}
\newcommand{\wero}{\texttt{w}}
\newcommand{\C}{\mathcal{C}}
\newcommand{\D}{\mathcal{D}}
\newcommand{\V}{\mathbb{V}}
\newcommand{\W}{\mathbb{W}}
\newcommand{\T}{\mathscr{T}}
\newcommand{\E}{\mathscr{E}}
\newcommand{\Ss}{\mathscr{S}}
\newcommand{\HL}{ \mbox{ \raisebox{6.9pt} {\tiny$\circ$} \kern-10.3pt} {H_L^1} }
\newcommand{\Wp}{ \mbox{ \raisebox{7.7pt} {\scriptsize$\circ$} \kern-10.1pt} {W^{1,p}} }
\newcommand{\Wpp}{ \mbox{ \raisebox{7.7pt} {\scriptsize$\circ$} \kern-10.1pt} {W^{1,p'}} }
\newcommand{\Sz}{ \mbox{ \raisebox{7.5pt} {\scriptsize$\circ$} \kern-10.1pt} {\Ss} }
\newcommand{\HLnew}{ \mbox{ \raisebox{7pt} {\scriptsize$\circ$} \kern-10.1pt}{H}^1_L }
\newcommand{\HLn}{{\mbox{\,\raisebox{4.5pt} {\tiny$\circ$} \kern-9.6pt}{H}^1_L  }}
\newcommand{\HLs}{{\mbox{\raisebox{8.7pt} {\scriptsize$\circ$} \kern-10.1pt}{H}^1_L  }}
\newcommand{\Tr}{\mathbb{T}}
\newcommand{\U}{\mathbb{U}}
\DeclareMathOperator*{\tr}{tr_\Omega}
\newcommand{\Laps}{(-\Delta)^s}
\newcommand{\GRAD}{\nabla}
\newcommand{\DIV}{\textrm{div}}
\newcommand{\diff}{\, \mbox{\rm d}}
\newcommand{\ie}{i.e.,\@\xspace}
\newcommand{\Hs}{\mathbb{H}^s(\Omega)}
\newcommand{\cf}{cf.\@\xspace}
\newcommand{\Wcal}{\mathcal{W}}
\newcommand{\Xcal}{\mathcal{X}}
\newcommand{\Ycal}{\mathcal{Y}}
\newcommand{\calMz}{{\mathcal{M}_{z'}}}
\newcommand{\Hsd}{\mathbb{H}^{-s}(\Omega)}
\newcommand{\ue}{\mathscr{U}}
\newcommand{\Ws}{\mathbb{H}^{1-s}(\Omega)}
\DeclareMathAlphabet{\mathpzc}{OT1}{pzc}{m}{it}
\numberwithin{equation}{section}
\newtheorem{theorem}[equation]{Theorem}
\newtheorem{proposition}[equation]{Proposition}
\newtheorem{conjecture}[equation]{Conjecture}
\theoremstyle{definition}
\newtheorem{definition}[equation]{Definition}
\theoremstyle{definition}
\newtheorem{remark}[equation]{Remark}
\newcommand{\distb}{{\textup{\textsf{d}}_{x_0}}^{\kern-0.8em\beta}}
\newcommand{\calL}{{\mathcal L}}
\newcommand{\Wmpo}{ \mbox{ \raisebox{7.4pt} {\tiny$\circ$} \kern-10.7pt} {W_p^m} }
\newcommand{\Wonepo}{ \mbox{ \raisebox{7.4pt} {\tiny$\circ$} \kern-10.7pt} {W_p^1} }
\newcommand{\Nin}{\,{\mbox{\,\raisebox{6.0pt} {\tiny$\circ$} \kern-11.1pt}\N }}
\newcommand{\Ninn}{{\mbox{\,\raisebox{4.5pt} {\tiny$\circ$} \kern-8.8pt}\N }}
\newcommand{\osc}{{\textup{\textsf{osc}}}}
\newcommand{\bsigma}{{\boldsymbol{\sigma}}}
\begin{document}

\title[A posteriori error analysis for fractional diffusion]
{A PDE approach to fractional diffusion: \\ a posteriori error analysis}

\author[L.~Chen]{Long Chen}
\address[L.~Chen]{Department of Mathematics, University of California at Irvine, Irvine, CA 92697, USA}
\email{chenlong@math.uci.edu}
\thanks{LC has been supported by NSF grants DMS-1115961, DMS-1418732, and DOE prime award \# DE-SC0006903.}

\author[R.H.~Nochetto]{Ricardo H.~Nochetto}
\address[R.H.~Nochetto]{Department of Mathematics and Institute for Physical Science and Technology,
University of Maryland, College Park, MD 20742, USA}
\email{rhn@math.umd.edu}
\thanks{RHN has been partially supported by NSF grants DMS-1109325 and DMS-1411808.}

\author[E.~Ot\'arola]{Enrique Ot\'arola}
\address[E.~Ot\'arola]{Department of Mathematics, University of Maryland, College Park, MD 20742, USA and Department of Mathematical Sciences,
George Mason University, Fairfax, VA 22030, USA.}
\email{kike@math.umd.edu}
\thanks{EO has been partially supported by the Conicyt-Fulbright Fellowship Beca Igualdad de Oportunidades and NSF grants DMS-1109325 and DMS-1411808.}

\author[A.J.~Salgado]{Abner J.~Salgado}
\address[A.J.~Salgado]{Department of Mathematics, University of Tennessee, Knoxville, TN 37996, USA}
\email{asalgad1@utk.edu}
\thanks{AJS has been supported in part by NSF grant DMS-1418784}

\subjclass[2000]{35J70,   
65N12,                    
65N30,                    
65N50}                    

\date{Version of \today.}

\keywords{Fractional diffusion;
finite elements;
a posteriori error estimates;
nonlocal operators;
nonuniformly elliptic equations;
anisotropic elements;
adaptive algorithm.}

\begin{abstract}
We derive a computable a posteriori error estimator for the $\alpha$-harmonic extension problem, 
which localizes the fractional powers of elliptic operators supplemented with Dirichlet boundary conditions.
Our a posteriori error estimator relies on the solution of small discrete problems on 
anisotropic \emph{cylindrical stars}. It exhibits built-in flux equilibration and is 
equivalent to the energy error up to data oscillation, under suitable
assumptions. We design a simple adaptive algorithm and
present numerical experiments which reveal a competitive performance.
\end{abstract}

\maketitle

\section{Introduction}
\label{sec:introduccion}

The objective of this work is the derivation and analysis of a computable, 
efficient and, under certain assumptions, reliable a posteriori error estimator for problems involving 
fractional powers of the Dirichlet Laplace operator 
$\Laps$ with $s \in (0,1)$, which for convenience we will simply call the fractional Laplacian.
Let $\Omega$ be an open, connected and bounded domain of $\R^n$ ($n\ge1$) with boundary 
$\partial\Omega$, $s\in (0,1)$ and let $f:\Omega \to \R$ be given. We shall be concerned 
with the following problem: find $u$ such that
\begin{equation}
\label{fl=f_bdddom}
    \Laps u = f, \ \text{in } \Omega, \qquad u = 0, \ \text{on } \partial\Omega.
\end{equation}

One of the main difficulties in the study of problem \eqref{fl=f_bdddom} is that the fractional Laplacian is a nonlocal 
operator; see \cite{CS:11,CS:07,Landkof}. To localize it, Caffarelli and Silvestre showed in \cite{CS:07} that any power
of the fractional Laplacian in $\R^n$ can be realized as an operator that maps a Dirichlet boundary condition to a 
Neumann-type condition via an extension problem on the upper half-space $\R^{n+1}_+$. For a bounded domain $\Omega$, 
the result by Caffarelli and Silvestre has been adapted in \cite{ BCdPS:12,CDDS:11,ST:10}, thus obtaining an extension 
problem which is now posed on the semi-infinite cylinder $\C = \Omega \times (0,\infty)$. This extension is the 
following mixed boundary value problem:
\begin{equation}
\label{alpha_harm_intro}
\begin{dcases}
  \DIV\left( y^\alpha \nabla \ue \right) = 0, & \text{in } \C, \\
  \ue = 0, \quad  \text{on } \partial_L \C, &
  \quad \frac{ \partial \ue }{\partial \nu^\alpha} = d_s f, \quad \text{on } \Omega \times \{0\}, \\
\end{dcases}
\end{equation}
where $\partial_L \C= \partial \Omega \times [0,\infty)$ is
the lateral boundary of $\C$, and $d_s$ is a positive normalization constant
that depends only on $s$; see \cite{CS:11,CS:07} for details. The parameter $\alpha$ is defined as
\begin{equation}
\label{eq:defofalpha}
  \alpha = 1-2s \in (-1,1),
\end{equation}
and the so-called conormal exterior derivative of $\ue$ at $\Omega \times \{ 0 \}$ is
\begin{equation}
\label{def:lf}
\frac{\partial \ue}{\partial \nu^\alpha} = -\lim_{y \rightarrow 0^+} y^\alpha \ue_y.
\end{equation}

We will call $y$ the \emph{extended variable} and the dimension $n+1$ in $\R_+^{n+1}$ the
\emph{extended dimension} of problem \eqref{alpha_harm_intro}.
The limit in \eqref{def:lf} must be understood in the distributional sense;
see \cite{CS:11,CS:07,CDDS:11} for more details.
As noted in \cite{BCdPS:12,CS:07,CDDS:11, ST:10}, the fractional Laplacian and the Dirichlet to
Neumann operator of problem \eqref{alpha_harm_intro} are related by
\[
  d_s \Laps u = \frac{\partial \ue}{\partial \nu^\alpha } \quad \text{in } \Omega.
\]

Based on the ideas presented above, the following simple strategy to find the solution of \eqref{fl=f_bdddom}
has been proposed and analyzed by the last three authors in \cite{NOS}: given a sufficiently smooth function $f$ we solve
\eqref{alpha_harm_intro}, thus obtaining a function $\ue = \ue(x',y)$;
setting $u: x' \in \Omega \mapsto u(x') = \ue(x',0) \in \R$, we obtain 
the solution of \eqref{fl=f_bdddom}. The results of \cite{NOS} provide an a priori error analysis 
which combines asymptotic properties of Bessel functions with polynomial interpolation theory 
on weighted Sobolev spaces.
The latter is valid for tensor product elements which may be graded in $\Omega$ and exhibit a large aspect ratio in $y$ 
(anisotropy) which is necessary to fit the behavior of $\ue(x',y)$ with $x' \in \Omega$ and $y > 0$.
The resulting a priori error estimate is quasi-optimal in both order and regularity for the extended problem \eqref{alpha_harm_intro}.
These results are summarized in Section~\ref{sec:apriori}.

The main advantage of the algorithm described above, is that we are solving
the local problem \eqref{alpha_harm_intro} instead of dealing with the nonlocal operator 
$\Laps$ of problem \eqref{fl=f_bdddom}. However, this comes at the expense of incorporating one 
more dimension to the problem,
thus raising the question of computationally efficiency.
A quest for the answer has been the main drive in our recent research program and motivates 
the study of a posteriori error estimators and adaptivity. The latter is also motivated by
the fact that the a priori theory developed in \cite{NOS} requires $f \in \Ws$ and $\Omega$ convex.
If one of these conditions is violated the solution $\ue(x',y)$ may have singularities in the direction of the $x'$-variables and thus
exhibit fractional regularity. 
As a consequence, quasi-uniform refinement of $\Omega$ would not result in an efficient solution technique and 
then an adaptive loop driven by an a posteriori error estimator 
is essential to recover optimal rates of convergence.

In this work we derive a computable, efficient and, under certain assumptions, reliable 
a posteriori error estimator and 
design an adaptive procedure to solve problem \eqref{alpha_harm_intro}. 
As the results of \cite{NOS} show, meshes must be highly anisotropic in the extended dimension $y$ 
if one intends for the method to be optimal. For this reason, it is imperative to design an a posteriori 
error estimator which is able to deal with such anisotropic behavior.
Before proceeding with our analysis, it is instructive to comment about the anisotropic a posteriori 
error estimators and analysis advocated in the literature.

A posteriori error estimators are computable quantities, \ie they may depend on the computed solution, mesh and data, 
but \emph{not} on the exact solution. They provide information about the quality of approximation 
of the numerical solution. They are problem-dependent and may be 
used to make a judicious mesh refinement in order to obtain the best possible approximation with the least 
amount of computational resources.
For isotropic discretizations, \ie meshes where the aspect 
ratio of all cells is bounded independently of the refinement level,
the theory of a posteriori error estimation is well understood. Starting with the pioneering work 
of Babu{\v{s}}ka and Rheinbolt \cite{BR78}, a great deal 
of work has been devoted to its study. We refer to \cite{AO,Verfurth} for an overview of the state of the art.
However, despite of what might be claimed in the literature, the theory of a posteriori error estimation 
on anisotropic discretizations, \ie
meshes where the cells have disparate sizes in each direction, is still in its infancy.

To the best of our knowledge the first work that attempts to deal with anisotropic
a posteriori error estimation is \cite{Siebert:96}. In this work, a residual a posteriori 
error estimator is introduced and allegedly analyzed
on anisotropic meshes. However, such analysis relies on assumptions on the exact 
and discrete solutions and on the mesh, which are neither proved nor there is a 
way to explicitly enforce them in the course of computations; see \cite[\S~6, Remark 3]{Siebert:96}.
Subsequently, in \cite{Kunert:00} the concept of \emph{matching function} is introduced 
in order to derive anisotropic a posteriori error indicators. The correct alignment of the grid 
with the exact solution is crucial to derive an upper bound for the error. Indeed, this 
upper bound involves the matching function, which depends on the error itself and then it does not provide
a \emph{real computable} quantity; see \cite[Theorem 2]{Kunert:00}. For similar works 
in this direction see \cite{KV:00,Kunert:01,Nicaise:06}. In \cite{Picasso:03}, the anisotropic
interpolation estimates derived in \cite{FP:03} are used to derive a Zienkiewicz--Zhu type 
of a posteriori error estimator. However, as properly pointed out in \cite[Proposition 2.3]{Picasso:03},
the ensuing upper bound for the error depends on the error itself, and thus, it is not \emph{computable}.

In our case, since the coefficient $y^{\alpha}$ in \eqref{alpha_harm_intro}
either degenerates $(s<1/2)$ or blows up $(s>1/2)$, the usual residual estimators
do not apply: integration by parts fails! Inspired by \cite{BabuskaMiller,MNS02}, we deal with both
the natural anisotropy of the mesh in the extended variable $y$ and the nonuniform coefficient $y^{\alpha}$, 
upon considering local problems on \emph{cylindrical stars}. The solutions
of these local problems allow us to define a computable and anisotropic a posteriori error estimator
which, under certain assumptions, is equivalent to the error up to data oscillations terms. 
In order to derive such a result,
a computationally implementable geometric condition needs to be imposed on the mesh, which does not depend on the 
exact solution of problem \eqref{alpha_harm_intro}. This approach is of value not only for 
\eqref{alpha_harm_intro}, but in general for anisotropic problems
since rigorous anisotropic a posteriori error estimators are not available in the literature. 

The outline of this paper is as follows. Section~\ref{sec:Prelim} 
sets the framework in which we will operate. 
Notation and terminology are introduced in \S~\ref{sub:notation}. 
We recall the definition of the fractional Laplacian
on a bounded domain via spectral theory in \S~\ref{sub:fractional_L}
and, in \S~\ref{sub:CaffarelliSilvestre}, we introduce function spaces that are suitable to study problems 
\eqref{fl=f_bdddom} and \eqref{alpha_harm_intro}. In Section~\ref{sec:apriori} we review the a 
priori error analysis developed in \cite{NOS}. 
The need for a new approach in a posteriori error estimation is examined in Section~\ref{sec:aposteriori_local}, where we show that the standard approaches either do not work or produce suboptimal results. This justifies the introduction of our new error estimator on \emph{cylindrical stars}.
Section~\ref{sec:aposteriori} is the core of this work and is dedicated to the development and analysis of our new error estimator. After some preliminary 
setup carried out in \S\S~\ref{subsec:preliminaries}--\ref{subsec:localspaces}, 
in \S~\ref{subsec:ideal} we introduce and analyze an ideal error estimator that, 
albeit not computable, sets the stage for \S~\ref{subsec:computable} where we devise a fully computable error estimator
and show its equivalence, under suitable assumptions, to the error up to data oscillation terms.
In Section~\ref{sec:numexp} we review the components of a standard adaptive loop and comment on some implementation details pertinent to the problem at hand. 
Finally, we present numerical experiments that illustrate and extend our theory.

\section{Notation and preliminaries}
\label{sec:Prelim}

\subsection{Notation}
\label{sub:notation}

Throughout this work $\Omega$ is an open, bounded and connected domain
of $\R^n$, $n\geq1$, with polyhedral boundary $\partial\Omega$.
We define the semi-infinite cylinder with base $\Omega$ and its lateral boundary, respectively, by
\[
 \C := \Omega \times (0,\infty), \qquad \partial_L \C  := \partial \Omega \times [0,\infty).
\]
Given $\Y>0$ we define the truncated cylinder with base $\Omega$ by
$
  \C_\Y := \Omega \times (0,\Y).
$
The lateral boundary $\partial_L\C_\Y$ is defined accordingly.

Throughout our discussion we will be dealing with objects defined in $\R^{n+1}$ 
and it will be convenient to distinguish the extended dimension.
A vector $x\in \R^{n+1}$, will be denoted by
\[
  x =  (x^1,\ldots,x^n, x^{n+1}) = (x', x^{n+1}) = (x',y),
\]
with $x^i \in \R$ for $i=1,\ldots,{n+1}$, $x' \in \R^n$ and $y\in\R$.

If $\Xcal$ and $\Ycal$ are normed vector spaces, we write $\Xcal \hookrightarrow \Ycal$
to denote that $\Xcal$ is continuously embedded in $\Ycal$. We denote by $\Xcal'$ the dual of $\Xcal$
and by $\|\cdot\|_{\Xcal}$ the norm of $\Xcal$.
The relation $a \lesssim b$ indicates that $a \leq Cb$, with a constant $C$ that does not
depend on $a$ or $b$ nor the discretization parameters. The value of $C$ might change at each occurrence.

\subsection{The fractional Laplace operator}
\label{sub:fractional_L}

Our definition is based on spectral theory.  
For any $f \in L^2(\Omega)$, the Lax-Milgram Lemma provides the existence and uniqueness of
$w \in H^1_0(\Omega)$ that solves
\[
 - \Delta w  = f \text{ in } \Omega, \qquad w = 0 \text{ on } \partial \Omega.
\]
The operator $(-\Delta)^{-1}: L^2(\Omega)\to L^2(\Omega)$ is compact, symmetric and positive, whence
its spectrum $\{ \lambda_k^{-1} \}_{k\in \mathbb N}$ is discrete, real, positive and accumulates at zero. 
Moreover, there exists $\{ \varphi_k \}_{k\in \mathbb N} \subset H^1_0(\Omega)$, which is
an orthonormal basis of $L^2(\Omega)$ and satisfies
\begin{equation}
  \label{eigenvalue_problem}
     - \Delta \varphi_k = \lambda_k \varphi_k  \text{ in } \Omega,
    \qquad
    \varphi_k = 0 \text{ on } \partial\Omega.
\end{equation}
Fractional powers of the Dirichlet Laplace operator can then be defined for $w \in C_0^{\infty}(\Omega)$ by
\begin{equation}
  \label{def:second_frac}
  (-\Delta)^s w  = \sum_{k=1}^\infty \lambda_k^{s} w_k \varphi_k,
\end{equation} 
where $w_k = \int_{\Omega} w \varphi_k $. By density $(-\Delta)^s$ can be extended 
to the space
\begin{equation}
\label{def:Hs}
  \Hs = \left\{ w = \sum_{k=1}^\infty w_k \varphi_k: 
  \sum_{k=1}^{\infty} \lambda_k^s w_k^2 < \infty \right\}
    =
  \begin{dcases}
    H^s(\Omega),  & s \in (0,\sr), \\
    H_{00}^{1/2}(\Omega), & s = \sr, \\
    H_0^s(\Omega), & s \in (\sr,1).
  \end{dcases}
\end{equation}
The characterization given by the second equality is shown in \cite[Chapter 1]{Lions}.
For $ s \in (0,1)$ we denote by $\Hsd$ the dual of $\Hs$.

\subsection{The Caffarelli-Silvestre extension problem}
\label{sub:CaffarelliSilvestre}

To exploit the Caffarelli-Silvestre result \cite{CS:07}, or its variants 
\cite{BCdPS:12,CT:10, CDDS:11}, we need to deal with a nonuniformly elliptic equation. To this end, we consider
weighted Sobolev spaces with the weight $|y|^{\alpha}$, $\alpha \in (-1,1)$.
If $D \subset \R^{n+1}$, we then define $L^2(|y|^\alpha,D)$ to be the space of all 
measurable functions defined on $D$ such that
\[
\| w \|_{L^2(|y|^{\alpha},D)}^2 = \int_{D}|y|^{\alpha} w^2 < \infty.
\]
Similarly we define the weighted Sobolev space
\[
H^1(|y|^{\alpha},D) =
  \left\{ w \in L^2(|y|^{\alpha},D): | \nabla w | \in L^2(|y|^{\alpha},D) \right\},
\]
where $\nabla w$ is the distributional gradient of $w$. We equip $H^1(|y|^{\alpha},D)$ with
the norm
\begin{equation}
\label{wH1norm}
\| w \|_{H^1(|y|^{\alpha},D)} =
\left(  \| w \|^2_{L^2(|y|^{\alpha},D)} + \| \nabla w \|^2_{L^2(|y|^{\alpha},D)} \right)^{1/2}.
\end{equation}
Since $\alpha \in (-1,1)$ we have that $|y|^\alpha$ belongs to the so-called
Muckenhoupt class $A_2(\R^{n+1})$; see \cite{FKS:82,GU,Muckenhoupt,Turesson}. This, in particular,
implies that $H^1(|y|^{\alpha},D)$ equipped with the norm \eqref{wH1norm}, is a Hilbert space
and the set $C^{\infty}(D) \cap H^1(|y|^{\alpha},D)$ is dense in $H^1(|y|^{\alpha},D)$
(cf.~\cite[Proposition 2.1.2, Corollary 2.1.6]{Turesson}, \cite{KO84} and \cite[Theorem~1]{GU}).
We recall now the definition of Muckenhoupt classes; 
see \cite{Muckenhoupt,Turesson}.

\begin{definition}[Muckenhoupt class $A_2$]
 \label{def:Muckenhoupt}
Let $\omega$ be a weight and $N \geq 1$. We say $\omega \in A_2(\R^N)$
if
\begin{equation}
  \label{A_pclass}
  C_{2,\omega} = \sup_{B} \left( \fint_{B} \omega \right)
            \left( \fint_{B} \omega^{-1} \right) < \infty,
\end{equation}
where the supremum is taken over all balls $B$ in $\R^N$.

If $\omega$ belongs to the Muckenhoupt class $A_2(\R^N)$, we say that $\omega$ is an $A_2$-weight, and
we call the constant $C_{2,\omega}$ in \eqref{A_pclass} the $A_2$-constant of $\omega$. 
\end{definition}

To study problem \eqref{alpha_harm_intro} we define the weighted Sobolev space
\begin{equation}
  \label{HL10}
  \HL(y^{\alpha},\C) = \left\{ w \in H^1(y^\alpha,\C): w = 0 \textrm{ on } \partial_L \C\right\}.
\end{equation}
As \cite[(2.21)]{NOS} shows, the following \emph{weighted Poincar\'e inequality} holds:
\begin{equation}
\label{Poincare_ineq}
  \int_{ \C }y^{\alpha}  w^2 \lesssim \int_{ \C}y^{\alpha} |\nabla w |^2,
  \quad \forall w \in \HL(y^{\alpha},\C).
\end{equation}
Then, the seminorm on $\HL(y^{\alpha},\C)$ is equivalent to the norm \eqref{wH1norm}.
For $w \in H^1(y^{\alpha},\C)$, we denote by $\tr w$ its trace onto
$\Omega \times \{ 0 \}$, and we recall that the trace operator $\tr$ satisfies,
(see \cite[Proposition 2.5]{NOS}, \cite[Proposition 2.1]{CDDS:11})
\begin{equation}
\label{Trace_estimate}
\tr \HL(y^\alpha,\C) = \Hs,
\qquad
  \|\tr w\|_{\Hs} \leq C_{\tr} \| w \|_{\HLn(y^\alpha,\C)}.
\end{equation}

Let us now describe the Caffarelli-Silvestre
result and its extension to bounded domains; see \cite{CS:07,ST:10}. 
Given $f \in \Hsd$, let $u \in \Hs$ be the solution of $(-\Delta)^s u = f$ in $\Omega$. We define 
the $\alpha$-harmonic extension of $u$ to the cylinder $\C$, as the function $\ue \in \HL(y^{\alpha},\C)$, 
solution of problem \eqref{alpha_harm_intro}, namely
\[
   (-\Delta)^s u =  d_s \frac{\partial \ue}{\partial \nu^{\alpha}} \quad \textrm{in  }  \Omega,
\quad \textrm{where  } d_s = 2^{1-2s} \frac{\Gamma(1-s)}{\Gamma(s)}.  
\]

Finally, we must mention that
\begin{equation}
\label{eq:estCtr}
  C_{\tr} \leq d_s^{-1/2}.
\end{equation}
Indeed, given $\psi \in \HL(y^\alpha, \C)$ we define $\Psi \in \HL(y^\alpha,\C)$ as the solution of
\[
  -\DIV(y^\alpha \nabla \Psi ) = 0, \text{ in } \C, \quad \Psi = 0, \text{ on } \partial_L\C, \quad \Psi = \tr \psi \text{ on } \Omega\times\{0\}.
\]
It is standard to show that $\Psi$ is the minimal norm extension of $\tr \psi$. Moreover, separation of variables 
gives $d_s \| \tr \psi \|_{\Hs}^2 = \| \nabla \Psi \|_{L^2(y^\alpha,\C)}^2$, \cite[Proposition 2.1]{CDDS:11}. Therefore
\[
  \| \tr \psi \|_{\Hs}^2 = \frac1{d_s} \| \Psi \|_{\HLn(y^\alpha,\C)}^2 \leq \frac1{d_s} \| \psi \|_{\HLn(y^\alpha,\C)}^2.
\]
Estimate \eqref{eq:estCtr} will be useful to obtain an upper bound of the error by the estimator.

\section{A priori error estimates}
\label{sec:apriori}
In an effort to make this contribution self-contained here we review the main results of \cite{NOS}, which deal with the a priori error analysis of discretizations of problem \eqref{fl=f_bdddom}. This will also serve to make clear the limitations of this theory, thereby justifying the quest for an a posteriori error analysis. To do so in this section, and this section only, we will assume the following regularity result, which is valid
if, for instance, the domain $\Omega$ is convex \cite{Grisvard}
\begin{equation}
\label{reg_Omega}
 \| w \|_{H^2(\Omega)} \lesssim \| \Delta_{x'} w \|_{L^2(\Omega)}, \quad \forall w \in H^2(\Omega) \cap H^1_0(\Omega). 
\end{equation}

Since $\C$ is unbounded, problem \eqref{alpha_harm_intro} cannot be directly approximated with
finite-element-like techniques. However, as \cite[Proposition 3.1]{NOS} shows, the solution $\ue$ of 
problem \eqref{alpha_harm_intro}
decays exponentially in the extended variable $y$ so that, by truncating the cylinder $\C$ to
$\C_\Y$ and setting a vanishing Dirichlet condition on the upper boundary
$y = \Y$, we only incur in an
exponentially small error in terms of $\Y$ \cite[Theorem 3.5]{NOS}.

Define
\[
  \HL(y^{\alpha},\C_\Y) = \left\{ v \in H^1(y^\alpha,\C_\Y): v = 0 \text{ on }
    \partial_L \C_\Y \cup \Omega \times \{ \Y\} \right\}.
\]
Then, the aforementioned problem reads: find $v \in \HL(y^{\alpha}, \C_\Y)$ such that
\begin{equation}
\label{alpha_harmonic_extension_weak_T}
  \int_{\C_\Y} y^\alpha \nabla v \nabla \phi
  = d_s \langle f, \tr \phi\rangle_{\Hsd \times \Hs},
\end{equation}
for all $v \in \HL(y^{\alpha},\C_\Y)$, where $\langle \cdot, \cdot \rangle_{\Hsd \times \Hs }$
denotes the duality pairing between $\Hsd$ and
$\Hs$, which is well defined as a consequence of \eqref{Trace_estimate}.

If $\ue$ and $v$ denote the solution of \eqref{alpha_harm_intro} and \eqref{alpha_harmonic_extension_weak_T}, 
respectively, then \cite[Theorem 3.5]{NOS} provides the following
exponential estimate
\begin{equation*}
  \| \nabla(\ue - v) \|_{L^2(y^{\alpha},\C )} \lesssim e^{-\sqrt{\lambda_1} \Y/4} \| f\|_{\Hsd},
\end{equation*}
where $\lambda_1$ denotes the first eigenvalue of the Dirichlet Laplace operator and
$\Y$ is the truncation parameter.

In order to study the finite element discretization of problem \eqref{alpha_harmonic_extension_weak_T} we 
must first understand the regularity of the solution $\ue$, since an error estimate for $v$, 
solution of \eqref{alpha_harmonic_extension_weak_T}, depends on the 
regularity of $\ue$ as well \cite[\S 4.1]{NOS}. We recall that 
\cite[Theorem 2.7]{NOS} 
reveals that the second order regularity of $\ue$ is much worse in 
the extended direction, namely
\begin{align}
    \label{reginx}
  \| \Delta_{x'} \ue\|_{L^2(y^{\alpha},\C)} + 
  \| \partial_y \nabla_{x'} \ue \|_{L^2(y^{\alpha},\C)}
  & \lesssim \| f \|_{\Ws}, \\
\label{reginy}
  \| \ue_{yy} \|_{L^2(y^{\beta},\C)} &\lesssim \| f \|_{L^2(\Omega)},
\end{align}
where $\beta > 2\alpha + 1$. This suggests that \emph{graded} meshes in the extended variable $y$ play 
a fundamental role. In fact, estimates \eqref{reginx}--\eqref{reginy} motivate the construction of a mesh over $\C_{\Y}$
as follows. We first consider a graded partition $\mathcal{I}_\Y$ of the interval $[0,\Y]$ with mesh points
\begin{equation}
\label{graded_mesh}
  y_k = \left( \frac{k}{M}\right)^{\gamma} \Y, \quad k=0,\dots,M,
\end{equation}
where $\gamma > 3/(1-\alpha)=3/(2s)$. We also consider $\T_\Omega = \{K\}$ to be a 
conforming and shape regular mesh of $\Omega$, where $K \subset \R^n$ is an element
that is isoparametrically equivalent either to the unit cube $[0,1]^n$ or the unit simplex in $\R^n$.
The collection of these triangulations $\T_{\Omega}$ is denoted by $\Tr_\Omega$. 
We construct the mesh $\T_{\Y}$ as the tensor product triangulation 
of $\T_\Omega$ and $\mathcal{I}_\Y$.
In order to obtain a global regularity assumption for $\T_{\Y}$, we assume that
there is a constant $\sigma_{\Y}$ such that
if $T_1=K_1\times I_1$ and $T_2=K_2\times I_2 \in \T_\Y$ have nonempty intersection, then
\begin{equation}
\label{shape_reg_weak}
     \frac{h_{I_1}}{h_{I_2}} \leq \sigma_{\Y},
\end{equation}
where $h_I = |I|$. It is well known that this weak regularity condition on the mesh allows for 
anisotropy in the extended variable (\cf \cite{DL:05,NOS}). The set of all triangulations of 
$\C_\Y$ that are obtained with this procedure and satisfy these conditions is denoted by $\Tr$. 
Figure~\ref{fig:ctm} shows an example of this type of meshes in three dimensions.

\begin{figure}[ht!]
\includegraphics[scale=0.44]{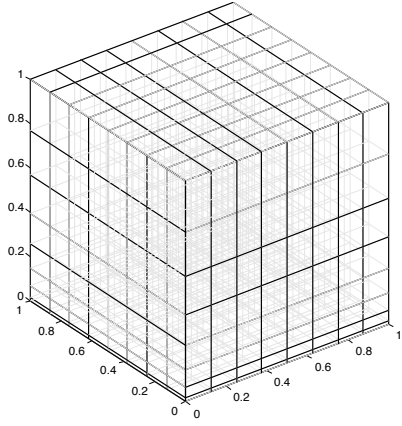}
\caption{
A three dimensional graded mesh of the cylinder
$(0,1)^2 \times (0,\Y)$ with $392$ degrees of freedom. The mesh is constructed
as a tensor product of a quasi-uniform mesh of $(0,1)^2$ with cardinality $49$
and the image of the quasi-uniform partition of the interval $(0,1)$
with cardinality $8$ under the mapping \eqref{graded_mesh}. }
\label{fig:ctm}
\end{figure}

For $\T_{\Y} \in \Tr$, we define
the finite element space 
\begin{equation}
\label{eq:FESpace}
  \V(\T_\Y) = \left\{
            W \in C^0( \overline{\C_\Y}): W|_T \in \mathcal{P}_1(K) \otimes \mathbb{P}_1(I) \ \forall T \in \T_\Y, \
            W|_{\Gamma_D} = 0
          \right\}.
\end{equation}
where $\Gamma_D = \partial_L \C_{\Y} \cup \Omega \times \{ \Y\}$ is called the
Dirichlet boundary; the space $\mathcal{P}_1(K)$ is $\mathbb{P}_1(K)$ --- the space of polynomials of total degree at most $1$, 
when the base $K$ of an element $T = K \times I$ is simplicial. If $K$ is an $n$-rectangle 
$\mathcal{P}_1(K)$ stands for $\mathbb{Q}_1(K)$ --- the space of polynomials of degree not larger than $1$ in each variable.
We also define $\U(\T_{\Omega})=\tr \V(\T_{\Y})$, \ie a 
$\mathcal{P}_1(K)$ finite element space over the mesh $\T_\Omega$.

The Galerkin approximation of \eqref{alpha_harmonic_extension_weak_T} is given by
the unique function $V_{\T_{\Y}} \in \V(\T_{\Y})$ such that
\begin{equation}
\label{harmonic_extension_weak}
  \int_{\C_\Y} y^{\alpha}\nabla V_{\T_{\Y}} \nabla W = 
d_s \langle f, \textrm{tr}_{\Omega} W \rangle_{\Hsd \times \Hs},
  \quad \forall W \in \V(\T_{\Y}).
\end{equation}
Existence and uniqueness of $V_{\T_{\Y}}$ immediately follows from $\V(\T_\Y) \subset \HL(y^{\alpha},\C_\Y)$
and the Lax-Milgram Lemma. It is trivial also to obtain a best approximation result 
\emph{\`a la} Cea. This best approximation result reduces the numerical analysis of 
problem \eqref{harmonic_extension_weak} to a question in approximation theory which in 
turn can be answered with the study of piecewise polynomial interpolation in Muckenhoupt weighted Sobolev 
spaces; see \cite{NOS,NOS2}. Exploiting the Cartesian structure of the mesh is possible to handle
anisotropy in the extended variable, construct a quasi interpolant $\Pi_{\T_\Y} : L^1(\C_\Y) \to \V(\T_\Y)$, and obtain
\begin{align*}
  \| v - \Pi_{\T_\Y} v \|_{L^2(y^\alpha,T)} & \lesssim 
    h_K  \| \nabla_{x'} v\|_{L^2(y^\alpha,S_T)} + h_I \| \partial_y v\|_{L^2(y^\alpha,S_T)}, \\
  \| \partial_{x_j}(v - \Pi_{\T_\Y} v) \|_{L^2(y^\alpha,T)} &\lesssim
    h_K  \| \nabla_{x'} \partial_{x_j} v\|_{L^2(y^\alpha,S_T)} + h_I \| \partial_y \partial_{x_j} v\|_{L^2(y^\alpha,S_T)},
\end{align*}
with $j=1,\ldots,n+1$; see \cite[Theorems 4.6--4.8]{NOS} and \cite{NOS2} for details.
However, since $\ue_{yy} \approx y^{-\alpha -1 }$ as $y \approx 0$,
we realize that $\ue \notin H^2(y^{\alpha},\C_{\Y})$
and the second estimate is not meaningful for $j=n+1$.
In view of estimate \eqref{reginy}
it is necessary to measure the regularity of $\ue_{yy}$ with a
stronger weight and thus compensate with a graded mesh in the extended
dimension. This makes anisotropic estimates essential. 

Notice that $\#\T_{\Y} = M \, \# \T_\Omega$, and that $\# \T_\Omega \approx M^n$
implies $\#\T_\Y \approx M^{n+1}$. Finally, if $\T_\Omega$ is shape regular and quasi-uniform, we have
$h_{\T_{\Omega}} \approx (\# \T_{\Omega})^{-1/n}$. All these considerations allow us to obtain the
following result; see \cite[Theorem 5.4]{NOS} and \cite[Corollary 7.11]{NOS}.

\begin{theorem}[a priori error estimate]
\label{TH:fl_error_estimates}
Let $\T_\Y \in \Tr$ be a tensor product grid, which is quasi-uniform in $\Omega$ and graded in the 
extended variable so that \eqref{graded_mesh} holds. If $\V(\T_\Y)$ is defined by \eqref{eq:FESpace} 
and $V_{\T_\Y} \in \V(\T_\Y)$ is the Galerkin approximation defined by
\eqref{harmonic_extension_weak}, then we have
\begin{equation*}
\label{optimal_rate}
  \| \ue - V_{\T_\Y} \|_{\HLn(y^\alpha,\C)} \lesssim
|\log(\# \T_{\Y})|^s(\# \T_{\Y})^{-1/(n+1)} \|f \|_{\mathbb{H}^{1-s}(\Omega)},
\end{equation*}
where $\Y \approx \log(\# \T_{\Y})$. Alternatively, if $u$ denotes the solution of \eqref{fl=f_bdddom}, then
\begin{equation*}
\| u - V_{\T_\Y}(\cdot,0) \|_{\Hs} \lesssim
|\log(\# \T_{\Y})|^s(\# \T_{\Y})^{-1/(n+1)} \|f \|_{\mathbb{H}^{1-s}(\Omega)}.
\end{equation*}
\end{theorem}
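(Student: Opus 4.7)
The plan is to split the error into a truncation part and a Galerkin part, control the truncation by the exponential estimate quoted just before the statement, and control the Galerkin part by C\'ea's lemma combined with the anisotropic interpolation estimates that precede the theorem.

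First I would write, using that $V_{\T_\Y}$ extended by zero past $y=\Y$ lies in $\HL(y^\alpha,\C)$,
\[
  \| \ue - V_{\T_\Y} \|_{\HLn(y^\alpha,\C)} \le \|\ue - v\|_{\HLn(y^\alpha,\C)} + \|v - V_{\T_\Y}\|_{\HLn(y^\alpha,\C_\Y)},
\]
with $v$ the solution of \eqref{alpha_harmonic_extension_weak_T}. The first term is $\lesssim e^{-\sqrt{\lambda_1}\Y/4}\|f\|_{\Hsd}$ by the exponential decay result. For the second, the weighted Poincar\'e inequality \eqref{Poincare_ineq} makes the bilinear form on $\HL(y^\alpha,\C_\Y)$ coercive, so C\'ea's lemma gives $\|\nabla(v-V_{\T_\Y})\|_{L^2(y^\alpha,\C_\Y)} \lesssim \|\nabla(v-\Pi_{\T_\Y}\ue)\|_{L^2(y^\alpha,\C_\Y)}$; by the triangle inequality this is controlled by the truncation error plus $\|\nabla(\ue-\Pi_{\T_\Y}\ue)\|_{L^2(y^\alpha,\C_\Y)}$.

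Next I would apply the element-wise anisotropic interpolation bounds on each $T=K\times I$. The terms $h_K\|\nabla_{x'}\partial_{x_j}\ue\|_{L^2(y^\alpha,S_T)}$ for $j=1,\dots,n$ and $h_I\|\partial_y\nabla_{x'}\ue\|_{L^2(y^\alpha,S_T)}$ are summable via the first regularity estimate \eqref{reginx}, contributing $\lesssim h_{\T_\Omega}\|f\|_{\Ws}\approx M^{-1}\|f\|_{\Ws}$ since $\T_\Omega$ is quasi-uniform with $\#\T_\Omega\approx M^n$. The delicate term is $h_I\|\partial_y^2\ue\|_{L^2(y^\alpha,S_T)}$, since $\ue_{yy}\approx y^{-\alpha-1}$ near $y=0$. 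On each slab $[y_{k-1},y_k]$ I would write $y^\alpha=y^{\alpha-\beta}y^\beta$, bound $y^{\alpha-\beta}\le y_{k-1}^{\alpha-\beta}$ for $k\ge 2$, and invoke \eqref{reginy} to obtain a control by $h_k^2\,y_{k-1}^{\alpha-\beta}\|f\|_{L^2(\Omega)}^2$; the first slab is handled separately using $h_1=M^{-\gamma}\Y$ and a direct integration against $y^\beta$. The crucial computation then reduces to showing
\[
  \sum_{k=2}^M h_k^2\, y_{k-1}^{\alpha-\beta} \lesssim M^{-2}\,\Y^{1-2s},
\]
where $y_k=(k/M)^\gamma\Y$: the choices $\beta>2\alpha+1$ and $\gamma>3/(2s)$ are exactly what make this series convergent with the stated bound.

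Collecting all contributions gives a total error of order $(M^{-1}+e^{-\sqrt{\lambda_1}\Y/4})\Y^{s}\|f\|_{\Ws}$ (up to constants). Balancing the two terms forces $\Y\approx\log(\#\T_\Y)$, and using $\#\T_\Y\approx M^{n+1}$ produces the advertised rate $|\log(\#\T_\Y)|^s(\#\T_\Y)^{-1/(n+1)}\|f\|_{\Ws}$. The second estimate on $\|u-V_{\T_\Y}(\cdot,0)\|_{\Hs}$ is immediate from the first after applying the trace bound \eqref{Trace_estimate} to $\ue-V_{\T_\Y}$. The main obstacle is the anisotropic summation in $y$: the local interpolation error for $\partial_y^2\ue$ is singular in the natural $L^2(y^\alpha)$ measure as $y\to 0^+$, and it is only the interplay between the extra weight supplied by \eqref{reginy} and the precise grading \eqref{graded_mesh} that converts a potentially divergent sum into a controlled geometric series, dictating both the restriction on $\gamma$ and the appearance of the $|\log(\#\T_\Y)|^s$ factor.
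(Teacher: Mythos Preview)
The paper does not prove this theorem; it is quoted from \cite[Theorem~5.4 and Corollary~7.11]{NOS}, and Section~\ref{sec:apriori} only collects the ingredients. Your sketch faithfully reconstructs the argument from those ingredients---the truncation/Galerkin splitting via the exponential decay estimate, C\'ea's lemma together with the anisotropic interpolation bounds for $\Pi_{\T_\Y}$, the weight-shifting $y^\alpha=y^{\alpha-\beta}y^\beta$ to invoke \eqref{reginy}, the graded-mesh series computation that forces $\gamma>3/(2s)$, the balancing $\Y\approx\log(\#\T_\Y)$, and the trace inequality \eqref{Trace_estimate} for the $\Hs$ bound---which is exactly the strategy of \cite{NOS}.
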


\begin{remark}[domain and data regularity]
\label{rm:dom_and_data2}
The results of Theorem~\ref{TH:fl_error_estimates} hold true
only if $f \in \mathbb{H}^{1-s}(\Omega)$ 
and the domain $\Omega$ is such that \eqref{reg_Omega} holds.
\end{remark}

\section{A posteriori error estimators: the search for a new approach}
\label{sec:aposteriori_local}

The function $\ue$, solution of the $\alpha$-harmonic extension problem \eqref{alpha_harm_intro},
has a singular behavior on the extended variable $y$, which is compensated by considering 
anisotropic meshes in this direction as dictated by \eqref{graded_mesh}.
However, the solution $\ue$, may also have singularities in the direction of the $x'$-variables and thus exhibit 
fractional regularity, which would not allow us to attain the almost optimal rate of convergence given 
by Theorem~\ref{optimal_rate}. In fact, as Remark~\ref{rm:dom_and_data2} indicates, it is necessary to 
require that $f \in \Ws$ and that the domain has the property 
\eqref{reg_Omega} in order to have an 
almost optimal rate of convergence. If any of these two conditions fail singularities may develop in the 
direction of the $x'$-variables, whose characterization is as yet an open problem; see \cite[\S~6.3]{NOS} for an
illustration of this situation.
The objective of this work is to derive a computable, efficient and,
under suitable assumptions, reliable
a posteriori error estimator 
for the finite element approximation of problem \eqref{alpha_harmonic_extension_weak_T} which can 
resolve such singularities via an adaptive algorithm. 

Let us begin by exploring the standard approaches advocated in the literature. We will see that 
they fail, thereby justifying the need for a new approach, 
which we will develop in Section~\ref{sec:aposteriori}.

\subsection{Residual estimators}
Simply put, the so-called residual error estimators use the strong form of the local residual as an indicator 
of the error. To obtain the strong form of the equation, integration by parts is necessary. Let us consider 
an element $T \in \T_{\Y}$ and integrate by parts the term
\[
 \int_{T} y^{\alpha} \nabla V_{\T_\Y} \cdot \nabla W = \int_{\partial T} W y^{\alpha} \nabla V_{\T_\Y} \cdot \nu 
- \int_{T} \DIV(y^{\alpha} \nabla V_{\T_\Y}) W,
\]
where $\nu$ denotes the unit outer normal to $T$. Since $\alpha \in (-1,1)$ the boundary integral is meaningless for $y = 0$. 
As we see, even the very first step (integration by parts) 
in the derivation of a residual a posteriori error estimator fails! 
At this point there is nothing left to do but to consider a different type of estimator.

\subsection{Local problems on stars over isotropic refinements}
\label{subsec:isotropic}

Inspired by \cite{BabuskaMiller,MNS02} we can construct, over shape regular meshes, a computable error 
estimator based on the solution of small discrete problems on stars. Its construction and analysis is 
similar to the developments of Section~\ref{sec:aposteriori} so we
shall not dwell on this any further.
Since we consider shape regular meshes,
such estimator is equivalent to the error up to data oscillation without any
additional conditions on the mesh, but under some suitable assumptions; see \S~5 for details.
Then, we have designed an adaptive algorithm driven by such a
posteriori error estimator on shape regular meshes \cite{MR2875241,MNS02}, and here we illustrate its performance with a simple but revealing numerical example.
We let $\Omega = (0,1)$ and $s \in (0,1)$. The right hand side is $f(x') = \pi^{2s} \sin(\pi x')$, so that
$u(x')=\sin(\pi x')$, and the solution $\ue$ to \eqref{alpha_harm_intro} is
\[
   \ue(x',y) = \frac{2^{1-s}\pi^{s}}{\Gamma(s)} \sin (\pi x') K_{s}(\pi y),
\]
where $K_s$ denotes the modified Bessel function of the second kind; see \cite[\S 2.4]{NOS} for details.
We point out that for the $\alpha$-harmonic extension we are solving a two dimensional
problem so the optimal rate of convergence in the $H^1(y^{\alpha},\C)$-seminorm that we expect is
$\mathcal{O}( \# \, \T_\Y^{-0.5})$. Figure~\ref{fig:s0.2isotropic} shows the experimental rate of 
convergence of this algorithm for the cases $s = 0.2$ and $s = 0.6$ which, as we see, is
\[
 \mathcal{O}\left(\# \T_\Y^{-s/2 } \right)
\]
and coincides with the suboptimal one obtained with 
quasi-uniform refinement; see \cite[\S~5.1]{NOS}.
These numerical experiments show that adaptive isotropic refinement cannot be optimal, thus justifying the need to 
introduce \emph{cylindrical stars} together with a new anisotropic error estimator, which will treat the $x'$-coordinates and the extended direction, $y$, separately.

\begin{figure}[ht!]
  \begin{center}
    \includegraphics[width=0.55\textwidth]{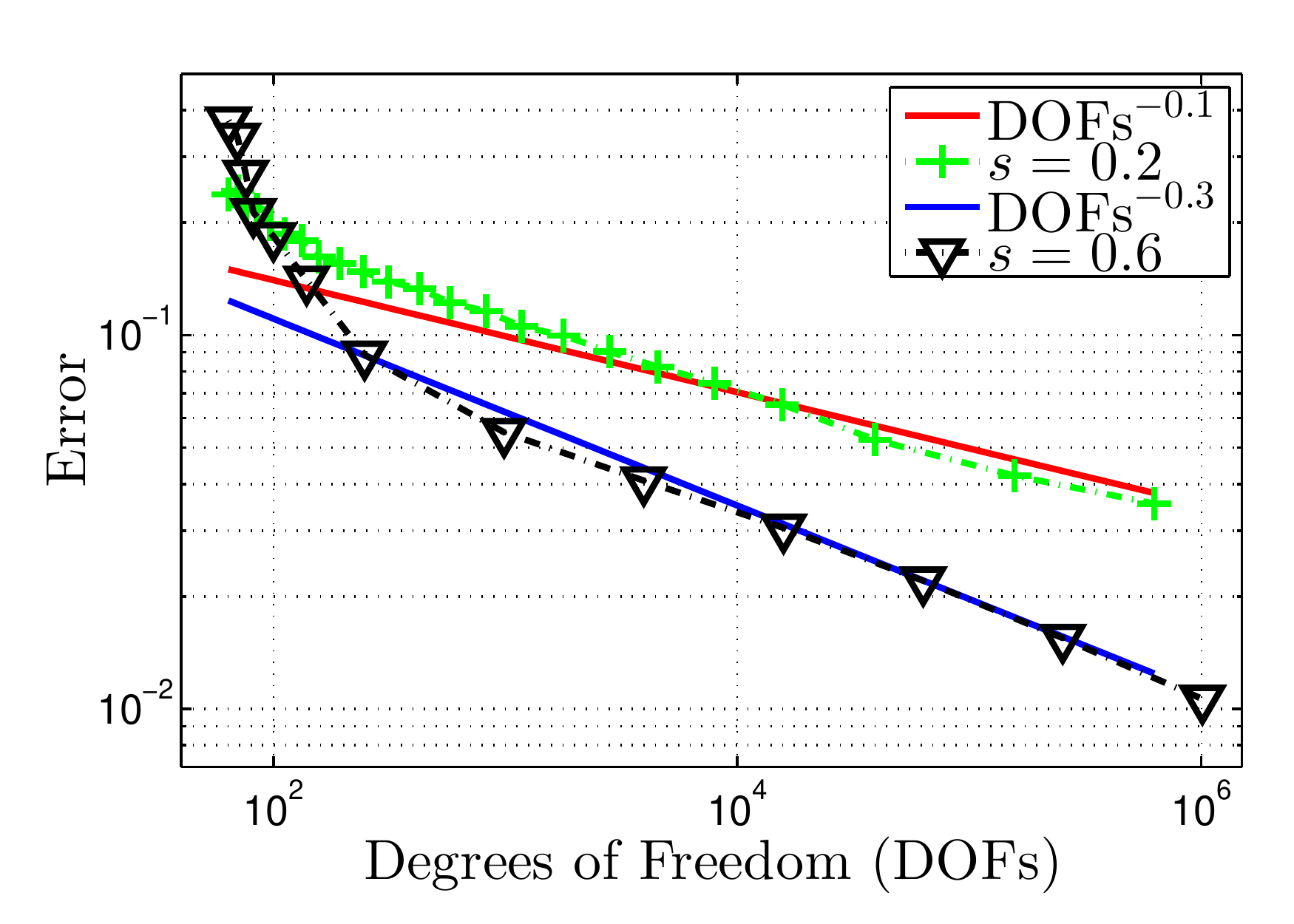}
  \end{center}
  \caption{Computational rate of convergence $\# (\T_{\Y})^{-s/2}$ for an isotropic adaptive algorithm for $n=1$, $s=0.2$ and $s=0.6$.}
\label{fig:s0.2isotropic}
\end{figure}

\section{A posteriori error estimators: cylindrical stars}
\label{sec:aposteriori}

It has proven rather challenging to derive and analyze a posteriori error estimators
 over a fairly general anisotropic mesh. For this reason, we introduce an \emph{implementable} 
geometric condition which will allow us to consider graded meshes in $\Omega$ in order 
to compensate for possible singularities in the $x'$-variables, while preserving 
the anisotropy in the extended direction, necessary to retain optimal orders of approximation.
We thus assume the following condition over the family of meshes $\Tr$: there exists a positive constant 
$ C_{\Tr}$ such that for every mesh $\T_{\Y} \in \Tr$
\begin{equation}
 \label{assumption}
h_{\Y} \leq C_{\Tr} \, h_{z'},
\end{equation}
for all the interior nodes $z'$ of $\T_{\Omega}$, 
where $h_{\Y}$ denotes the \emph{largest} mesh size in the $y$ direction, and 
$h_{z'} \approx |S_{z'}|^{1/n}$; see \S~\ref{subsec:preliminaries} for the precise definition of 
$h_{z'}$ and $S_{z'}$. We remark that this condition is 
satisfied in the case of quasi-uniform refinement in the variable $x'$, which is a consequence of the convexity 
of the function involved in \eqref{graded_mesh}. In fact, a simple computation shows
\begin{equation}
\label{eq:mesh_rels}
 h_{\Y}= y_{M} - y_{M-1} = \frac\Y{M^\gamma} \big( (M)^\gamma - (M-1)^\gamma \big)
 \leq \gamma \frac\Y{M},
\end{equation}
where $\gamma > 3/(1-\alpha) = 3/(2s)$. We must reiterate that this
mesh restriction is fully implementable. We refer the reader to Section~\ref{sec:numexp} for more details on this.

\begin{remark}[$s$-independent mesh grading]
We point out that the term $\gamma = \gamma(s)$ in
\eqref{eq:mesh_rels} deteriorates as $s$ becomes small
because $\gamma > 3/(2s)$. However, a 
modified mesh grading in the $y$-direction has been proposed in \cite[\S~7.3]{CNOS},
which does not change the ratio of degrees of freedom in $\Omega$ and the extended dimension by more than
a constant and provides a uniform bound with respect to $s \in (0,1)$, \ie 
$h_{\Y} \leq  C \Y / {M}$ where $C$ does not depends on $s$.
\end{remark}

\subsection{Preliminaries}
\label{subsec:preliminaries}

Let us begin the discussion on a posteriori error estimation with some terminology and notation. 
Given a node $z$ on the mesh $\T_{\Y}$, we exploit the tensor product structure of $\T_{\Y}$,
and  we write $z = (z',z'')$ where $z'$ and $z''$ are nodes on the meshes 
$\T_{\Omega}$ and $\mathcal{I}_{\Y}$ respectively. 
 
Given a cell $K \in \T_{\Omega}$, we denote by $\N(K)$ and $\Nin(K)$ the set of nodes and interior nodes of $K$, respectively. We set 
\[
\N(\T_{\Omega}) = \bigcup_{K \in \T_\Omega} \N(K), \qquad \Nin(\T_{\Omega}) = \bigcup_{K \in \T_\Omega} \Nin(K).
\]
Given $T \in \T_{\Y}$, we define $\N(T)$ and $\Nin(T)$ accordingly, \ie as the set of nodes and interior and Neumann nodes of $T$, respectively. Similarly, we define $\Nin(\T_{\Y})$ and $\N(\T_{\Y})$.
Any discrete function $W \in \V(\T_{\Y})$ is uniquely characterized by its  nodal values on the set 
$\Nin(\T_{\Y})$.  Moreover, the functions $\phi_z \in \V(\T_{\Y})$, $z \in \Nin(\T_{\Y})$, such that 
$\phi_z(\wero)=\delta_{z \wero}$ for all $\wero \in \N(\T_{\Y})$ are the canonical basis of $\V(\T_{\Y})$, \ie
\[
 W = \sum_{z \in \Ninn(\T_{\Y})} W(z) \phi_z.
\]
The functions $\{ \phi_z: z \in \Nin(\T_{\Y}) \}$ are the so-called \emph{shape functions} of $\V(\T_{\Y})$. 
Analogously, given a node $z' \in \Nin(\T_{\Omega})$, we also consider the discrete functions 
$ \varphi_{z'} \in \U(\T_{\Omega}) = \tr \V(\T_{\Y})$ defined by $\varphi_{z'}(\wero')=\delta_{z'\wero'}$ 
for all $\wero' \in \N(\T_{\Omega})$. 
The set $\{ \varphi_{z'}: z' \in \Nin(\T_{\Omega})\}$ is the canonical basis of $\U(\T_{\Omega})$.

The shape functions $\{ \phi_z: z \in \N(\T_{\Y}) \}$ satisfy two
properties which will prove useful in the sequel. First, we have the
so-called \emph{partition of unity property}, \ie
\begin{equation}
\label{partition}
\sum_{z \in \N(\T_{\Y})} \phi_z = 1 \quad \textrm{in} \quad \bar{\C}_{\Y}.
\end{equation}
Second, for any $z \in \Nin(\T_{\Y})$, the corresponding shape
function $\phi_{z}$ belongs to $\V(\T_\Y)$ whence we have
the so-called {\it Galerkin orthogonality}, \ie
\begin{equation}
\label{GOadap}
\int_{\C_\Y} y^{\alpha} \nabla(v- V_{\T_{\Y}} ) \nabla \phi_{z} = 0.
\end{equation}

The partition of unity property also holds for the shape functions $\{ \varphi_{z'}: z' \in \N(\T_{\Omega}) \}$:
\[
  \sum_{z' \in \N(\T_{\Omega})} \varphi_{z'} = 1 \quad \text{in } \bar\Omega.
\]

Given $z' \in \N(\T_{\Omega})$ and the associated shape function $\varphi_{z'}$, we define the 
\emph{extended} shape function $\tilde{\varphi}_{z'}$ by
$\tilde{\varphi}_{z'}(x',y) = \varphi_{z'}(x') \mathbbm{1}_{(0,\Y)}(y)$. These functions satisfy 
the following partition of unity property:
\begin{equation}
\label{partitionx}
\sum_{z' \in \N(\T_{\Omega})} \tilde{\varphi}_{z'} = 1 \quad \text{in } \bar\C_{\Y}.
\end{equation}

Given $z' \in \N(\T_{\Omega})$, we define the \emph{star} around $z'$ as
\[
  S_{z'} = \bigcup_{K \ni z'} K \subset \Omega,
\]
and the \emph{cylindrical star} around $z'$ as
\[
  \C_{z'} := \bigcup\left\{ T \in \T_\Y : T = K \times I,\ K \ni z'  \right\}= S_{z'} \times (0,\Y) \subset \C_{\Y}.
\]
Given an element $K \in \T_{\Omega}$ we define its \emph{patch} as 
$
  S_K := \bigcup_{z' \in K} S_{z'}.
$
For $T \in \T_\Y$ its patch $S_T$ is defined similarly. Given $z' \in \N(\T_{\Omega})$
we define its \emph{cylindrical patch} as
\[
\D_{z'} := \bigcup  \left\{ \C_{\wero'}: \wero' \in S_{z'} \right\} \subset \C_{\Y}.
\]
For each $z' \in \N(\T_{\Omega})$ we set $h_{z'} := \min\{h_{K}: K \ni z' \}$. 

\subsection{Local weighted Sobolev spaces}
\label{subsec:localspaces}

In order to define the local a posteriori error estimators we first need to 
define some local weighted Sobolev spaces. 

\begin{definition}[local spaces]
Given $z' \in \N(\T_\Omega)$ and its associated cylindrical star $\C_{z'}$, we define
\[
\W(\C_{z'}) = \left \{ w \in H^1(y^{\alpha},\C_{z'} ): w = 0 \textrm{ on } \partial \C_{z'} 
\setminus \Omega \times \{ 0\} \right \}.
\]
\end{definition}

The space $\W(\C_{z'})$ defined above is Hilbert due to the fact that the weight $|y|^\alpha$ belongs to the 
class $A_2(\R^{n+1})$; see Definition~\ref{def:Muckenhoupt}. Moreover, as the following result shows, a weighted Poincar\'e-type inequality holds and, consequently,
the semi-norm $\interleave w \interleave_{\C_{z'}} = \| \nabla w \|_{L^2(y^{\alpha},\C_{z'})}$ defines a norm on $\W(\C_{z'})$; see also \cite[\S~2.3]{NOS}.

\begin{proposition}[weighted Poincar\'e inequality]
\label{pro:poincare}
Let $z' \in \N(\T_{\Omega})$. If the function $w \in \W(\C_{z'})$, 
then we have
\begin{equation}
\label{eq:Poincare}
  \| w \|_{L^2(y^{\alpha},\C_{z'})} \lesssim \Y \interleave w \interleave_{\C_{z'}}.
\end{equation}
\end{proposition}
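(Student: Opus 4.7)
The plan is to reduce the inequality to a one-dimensional weighted Poincaré inequality on the interval $(0,\Y)$, taking advantage of the fact that the vanishing condition on $\partial\C_{z'} \setminus (\Omega \times \{0\})$ forces $w(\cdot,\Y) = 0$. The extended variable is the only direction in which we need to control $w$ by its gradient, and the $x'$-integration is passive.

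First, by the density of smooth functions in $\W(\C_{z'})$ (a consequence of $|y|^\alpha \in A_2(\R^{n+1})$ and the results of Turesson quoted in \S\ref{sub:CaffarelliSilvestre}), it suffices to establish the estimate for $w \in C^\infty(\overline{\C_{z'}})$ with $w(\cdot,\Y)=0$. For such $w$ and almost every $x' \in S_{z'}$ we use the fundamental theorem of calculus in $y$,
\[
  w(x',y) \; = \; -\int_y^\Y \partial_t w(x',t)\, dt.
\]
Splitting the integrand as $t^{-\alpha/2}\cdot t^{\alpha/2}\partial_t w$ and applying Cauchy--Schwarz gives
\[
  |w(x',y)|^2 \;\le\; \left(\int_y^\Y t^{-\alpha}\,dt\right)\left(\int_0^\Y t^\alpha |\partial_t w(x',t)|^2\,dt\right).
\]
Since $\alpha\in(-1,1)$, the first factor is bounded by $\Y^{1-\alpha}/(1-\alpha)$.

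Next I multiply by $y^\alpha$, integrate in $y$ over $(0,\Y)$, and use $\int_0^\Y y^\alpha\,dy = \Y^{1+\alpha}/(1+\alpha)$:
\[
  \int_0^\Y y^\alpha |w(x',y)|^2\,dy \;\le\; \frac{\Y^{2}}{1-\alpha^2}\int_0^\Y t^\alpha |\partial_t w(x',t)|^2\,dt.
\]
Integrating this in $x'$ over $S_{z'}$ and using Fubini, together with $|\partial_y w|^2 \le |\nabla w|^2$, yields
\[
  \|w\|_{L^2(y^\alpha,\C_{z'})}^2 \;\le\; \frac{\Y^2}{1-\alpha^2}\,\|\nabla w\|_{L^2(y^\alpha,\C_{z'})}^2
  \;=\; \frac{\Y^2}{1-\alpha^2}\,\interleave w\interleave_{\C_{z'}}^2,
\]
which is the claimed inequality with hidden constant $(1-\alpha^2)^{-1/2}$ depending only on $s$.

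The only nontrivial point is justifying the pointwise representation and the density step, but these are standard once we invoke that $|y|^\alpha$ is a Muckenhoupt $A_2$-weight so that $C^\infty(\overline{\C_{z'}})$ is dense in $H^1(y^\alpha,\C_{z'})$ and the trace on $S_{z'}\times\{\Y\}$ (where the weight is smooth and strictly positive) is well-defined. Observe that nowhere do we use information at $y=0$: the boundary $S_{z'}\times\{0\}$ is left free, consistent with the definition of $\W(\C_{z'})$.
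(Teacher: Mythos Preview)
Your proof is correct and follows essentially the same route as the paper: reduce by density to smooth functions, use $w(x',\Y)=0$ and the fundamental theorem of calculus in $y$, then apply Cauchy--Schwarz with the splitting $t^{-\alpha/2}\cdot t^{\alpha/2}\partial_t w$ and integrate. The only cosmetic difference is that the paper bounds the product $\int_0^\Y y^\alpha\,dy\cdot\int_0^\Y \xi^{-\alpha}\,d\xi$ by invoking the $A_2$ constant $C_{2,|y|^\alpha}\,\Y^2$, whereas you compute these integrals explicitly to obtain the sharper constant $\Y^2/(1-\alpha^2)$.
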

\begin{proof}
By density \cite[Corollary 2.1.6]{Turesson}, it suffices to reduce the considerations to a smooth function $w$. Given 
$x' \in S_{z'}$, we have that $w(x',\Y) = 0$ so that
\[
  w(x',y) = -\int_{y}^{\Y} \partial_y w(x',\xi) \diff \xi.
\]
Multiplying the expression above by $|y|^{\alpha}$, integrating over $\C_{z'}$, 
and using the Cauchy Schwarz inequality, we arrive at
\begin{align*}
  \int_{\C_{z'}} |y|^{\alpha}| w(x',y) |^2 \diff x' \diff y 
  & \leq  
  \int_{\C_{z'}} |y|^{\alpha} \left( \int_{0}^{\Y} |\xi|^\alpha | \partial_y w(x',\xi) |^2 \diff \xi \int_{0}^{\Y} |\xi|^{-\alpha} \diff \xi \right)  \diff x' \diff y
  \\
  & =   \int_{0}^{\Y} |y|^{\alpha}\diff y   \int_{0}^{\Y} |\xi|^{-\alpha}\diff \xi  
  \int_{\C_{z'}} |\xi|^\alpha | \partial_y w(x',\xi) |^2 \diff x' \diff \xi
  \\
  & \leq   C_{2,|y|^{\alpha}} \Y^2  \int_{\C_{z'}} |y|^\alpha | \partial_y w(x',y) |^2 \diff x' \diff y,
 \end{align*}
where in the third inequality we used that $y^{\alpha} \in A_2(\R^{n+1})$. In conclusion,
\[
  \| w \|_{L^2(y^{\alpha},\C_{z'})} \lesssim \Y  \| \partial_y w \|_{L^2(y^{\alpha},\C_{z'})}, 
\]
which is \eqref{eq:Poincare}.
\end{proof}

\begin{remark}[anisotropic weighted Poincar\'e inequality]
\label{re:poincare}
Let $z' \in \N(\T_{\Omega})$. If $w \in \W(\C_{z'})$, then by extending the one-dimensional argument in the proof of Proposition~\ref{pro:poincare}
to a $n$-dimensional setting, we can also derive
\[
  \| w \|_{L^2(y^{\alpha},\C_{z'})} \lesssim h_{z'} \| \nabla_{x'} w \|_{L^2(y^{\alpha},\C_{z'})}.
\]
\end{remark}

\subsection{An ideal a posteriori error estimator}
\label{subsec:ideal}
Here we define an \emph{ideal} a posteriori error estimator on anisotropic meshes
which is \emph{not computable}. However, it provides the intuition required to define a 
discrete and computable error indicator, as explained in \S~\ref{subsec:computable}. 
On the basis of assumption \eqref{assumption}, we prove that this 
ideal error estimator is equivalent to the error without any oscillation terms.

Inspired by \cite{BabuskaMiller,CF:00,MNS02} we define $\zeta_{z'} \in \W(\C_{z'})$ to be the solution of
    \begin{equation}
    \label{local_problem}
    \int_{\C_{z'}} y^{\alpha} \nabla\zeta_{z'} \nabla \psi 
    =
    d_s \langle f, \tr \psi  \rangle_{\Hsd \times \Hs} 
     -  \int_{\C_{z'}} y^{\alpha} \nabla V_{\T_{\Y}} \nabla \psi ,
    \end{equation}
for all $\psi \in \W(\C_{z'})$. The existence and uniqueness of $\zeta_{z'} \in \W(\C_{z'})$
is guaranteed by the Lax--Milgram Lemma and the weighted Poincar\'e inequality of Proposition~\ref{pro:poincare}.
The continuity of the right hand side of \eqref{local_problem}, as a linear functional in 
$\W(\C_{z'})$, follows from \eqref{Trace_estimate} and
the Cauchy-Schwarz inequality. 
We then define the global error estimator
  \begin{equation}
  \label{noncompestimator}
  \tilde \E_{\T_{\Y}} = \left( \sum_{z' \in \N(\T_\Omega) } \tilde \E_{z'}^2 \right)^{1/2},
  \end{equation}
in terms of the local error indicators
  \begin{equation}
  \label{noncomplocal}
  \tilde \E_{z'} = \interleave \, \zeta_{z'} \! \interleave_{\C_{z'}}.
  \end{equation}
The properties of this ideal estimator are as follows.

\begin{proposition}[ideal estimator]
\label{pro:ideal_estimator}
Let $v \in \HL(y^\alpha,\C_\Y)$ and $V_{\T_\Y} \in \V(\T_\Y)$ solve \eqref{alpha_harmonic_extension_weak_T} 
and \eqref{harmonic_extension_weak}, respectively. Then,
the ideal estimator $\tilde{\E}_{\T_\Y}$, defined in \eqref{noncompestimator}--\eqref{noncomplocal}, satisfies
\begin{equation}
  \label{ideal_bounds1}
  \|\nabla(v - V_{\T_{\Y}}) \|_{L^2(y^{\alpha},\C_\Y)} \lesssim \tilde \E_{\T_{\Y}},
\end{equation}
and for all $z' \in \N(\T_{\Omega})$
\begin{equation}
\label{ideal_bounds2}
\tilde \E_{z'} \leq \|\nabla(v - V_{\T_{\Y}}) \|_{L^2(y^{\alpha},\C_{z'})}.
\end{equation}
\end{proposition}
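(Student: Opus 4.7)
The proof falls naturally into two parts: the easier local lower bound, which is a direct Cauchy--Schwarz argument, and the more delicate global upper bound, which I plan to carry out following the Babu\v{s}ka--Miller / Mora--Nochetto--Siebert template, with a key role played by the anisotropic quasi-interpolant from \cite{NOS,NOS2} and the geometric assumption \eqref{assumption}.

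For \eqref{ideal_bounds2}, I would first observe that extending $\zeta_{z'}\in\W(\C_{z'})$ by zero to $\C_\Y$ produces a function in $\HL(y^\alpha,\C_\Y)$, because $\zeta_{z'}$ vanishes on $\partial\C_{z'}\setminus(\Omega\times\{0\})$. Using this extension as a test function in the difference of \eqref{alpha_harmonic_extension_weak_T} and \eqref{harmonic_extension_weak}, and comparing with \eqref{local_problem} at $\psi=\zeta_{z'}$, yields
\[
\tilde\E_{z'}^2=\int_{\C_{z'}}y^\alpha\nabla(v-V_{\T_\Y})\cdot\nabla\zeta_{z'},
\]
and Cauchy--Schwarz closes this direction.

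For \eqref{ideal_bounds1}, I would set $e:=v-V_{\T_\Y}$ and $g:=e-\Pi_{\T_\Y}e$, where $\Pi_{\T_\Y}:L^1(\C_\Y)\to\V(\T_\Y)$ is the anisotropic quasi-interpolant of \cite{NOS,NOS2}. Galerkin orthogonality \eqref{GOadap} allows one to replace $e$ by $g$ in the test slot of the residual, and the partition of unity \eqref{partitionx} splits $g=\sum_{z'}\tilde\varphi_{z'}g$. A boundary-condition check (using $g|_{\Gamma_D}=0$ and the support of $\tilde\varphi_{z'}$) shows $\tilde\varphi_{z'}g\in\W(\C_{z'})$, so \eqref{local_problem} provides the localization
\[
\int_{\C_\Y}y^\alpha|\nabla e|^2=\sum_{z'\in\N(\T_\Omega)}\int_{\C_{z'}}y^\alpha\nabla\zeta_{z'}\cdot\nabla(\tilde\varphi_{z'}g),
\]
and two successive Cauchy--Schwarz inequalities (in the integral and in the sum) give the bound $\|\nabla e\|_{L^2(y^\alpha,\C_\Y)}^2\leq \tilde\E_{\T_\Y}\big(\sum_{z'}\|\nabla(\tilde\varphi_{z'}g)\|_{L^2(y^\alpha,\C_{z'})}^2\big)^{1/2}$.

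The main obstacle will be to show that the remaining sum is controlled by $\|\nabla e\|_{L^2(y^\alpha,\C_\Y)}^2$. Leibniz produces two contributions: the piece $\sum\|\tilde\varphi_{z'}\nabla g\|^2$ is handled by the pointwise bound $\sum\tilde\varphi_{z'}^2\leq 1$ combined with the $H^1(y^\alpha)$-stability of $\Pi_{\T_\Y}$. The delicate piece $\sum\|g\nabla\tilde\varphi_{z'}\|^2$ calls upon $|\nabla\tilde\varphi_{z'}|\lesssim h_{z'}^{-1}$ on $\C_{z'}$ together with the anisotropic $L^2$-approximation bound
\[
\|g\|_{L^2(y^\alpha,T)}^2\lesssim h_K^2\|\nabla_{x'}e\|_{L^2(y^\alpha,S_T)}^2+h_I^2\|\partial_y e\|_{L^2(y^\alpha,S_T)}^2,\qquad T=K\times I,
\]
from \cite{NOS,NOS2}, and produces the anisotropic ratio $h_I/h_K$ in the $y$-derivative term. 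The geometric assumption \eqref{assumption} is precisely what is needed here: it implies $h_I\leq h_\Y\leq C_{\Tr}h_{z'}\lesssim h_K$, hence $h_I/h_K\lesssim 1$. The bounded overlap of the patches $\{S_T\}$ then closes the estimate, and dividing by $\|\nabla e\|_{L^2(y^\alpha,\C_\Y)}$ yields \eqref{ideal_bounds1}.
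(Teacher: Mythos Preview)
Your proposal is correct and follows essentially the same route as the paper: Galerkin orthogonality, the partition of unity \eqref{partitionx}, the anisotropic quasi-interpolant $\Pi_{\T_\Y}$ of \cite{NOS,NOS2}, and the mesh condition \eqref{assumption} to control the cross term in the Leibniz expansion. The only cosmetic differences are that the paper keeps the test function $w$ generic until the very end before setting $w=e_{\T_\Y}$, and that the delicate ratio arising from $|\nabla_{x'}\tilde\varphi_{z'}|\lesssim h_{z'}^{-1}$ is $h_I/h_{z'}$ rather than $h_I/h_K$ (these are comparable by shape regularity of $\T_\Omega$, so your argument goes through unchanged).
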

\begin{proof}
If $e_{\T_{\Y}} := v - V_{\T_{\Y}}$ denotes the error, then for any $w \in \HL(y^{\alpha},\C_\Y)$ we have 
\begin{align*}
 \int_{\C_\Y} y^{\alpha} & \nabla e_{\T_\Y} \nabla w = 
      d_s \langle f, \tr w \rangle_{\Hsd \times \Hs}
                      -  \int_{\C_\Y} y^{\alpha} \nabla V_{\T_{\Y}} \nabla w
      \\
      =  & \quad d_s \langle f, \tr (w -W)
      \rangle_{\Hsd \times \Hs} - \int_{\C_{\Y}} y^{\alpha} \nabla V_{\T_{\Y}} \nabla \left(w-W\right)
      \\
      =  & \sum_{z' \in \N(\T_\Omega)} \! d_s \langle f, \tr [(w-W) \tilde \varphi_{z'}]  
      \rangle_{\Hsd \times \Hs}
                    - \int_{\C_{z'}} y^{\alpha} \nabla V_{\T_{\Y}} \nabla [(w -W)\tilde \varphi_{z'}]
\end{align*}
for any $W \in \V(\T_{\Y})$, where to derive the expression above we have used 
Galerkin orthogonality \eqref{GOadap} and the partition of unity property \eqref{partitionx}.

Notice that for each ${z'} \in \N(\T_{\Omega})$ the function $(w - W) \tilde \varphi_{z'} \in \W(\C_{z'})$. Indeed, if $z'$ is an interior node, 
\begin{equation}
\label{vanishing}
  (w - W) \tilde \varphi_{z'}|_{\partial \C_{z'} \setminus \Omega \times \{0\} } = 0  
\end{equation}
because of the vanishing property of the shape function $\varphi_{z'}$ on $\partial S_{z'}$ together with the fact that
$w = W = 0$ on $\Omega \times \{ \Y \}$; otherwise, if $z'$ is a
Dirichlet node then $w = W = 0$ on $\partial\C_{z'}$, and we get \eqref{vanishing}.

Set $W = \Pi_{\T_{\Y}}w$, where $\Pi_{\T_\Y}$ denotes the
quasi-interpolation operator introduced in \cite[\S~4]{NOS}; see also \cite{NOS2}. This yields a bound on
$\interleave (w - W) \tilde \varphi_{z'} \interleave_{\C_{z'}}$ as follows:
\begin{align*}
  \interleave (w - \Pi_{\T_\Y}w) \tilde \varphi_{z'} \interleave_{\C_{z'}}^2 & \lesssim 
  \int_{\C_{z'}} y^{\alpha}|\nabla (w -\Pi_{\T_{\Y}} w )|^2 \tilde
  \varphi_{z'}^2 \\
  & + \int_{\C_{z'}}  y^{\alpha}|w -\Pi_{\T_{\Y}} w|^2
    |\nabla_{x'} \tilde \varphi_{z'}|^2 
    \lesssim \interleave w \interleave_{\D_{z'}}^2.
\end{align*}
To bound the first term above we use the local stability of $\Pi_{\T_{\Y}}$ \cite[Theorems 4.7 and 4.8]{NOS} together with
the fact that $0\leq \tilde \varphi_{z'} \leq 1$ for all $x \in
\C_{\Y}$. For the second term we resort to
the local approximation properties of $\Pi_{\T_{\Y}}$ \cite[Theorems 4.7 and 4.8]{NOS}
\begin{equation}
  \begin{aligned}
    \int_{\C_{z'}}  y^{\alpha}|w-\Pi_{\T_{\Y}} w|^2 |\nabla_{x'} \tilde \varphi_{z'}|^2 & \lesssim 
    \frac{1}{h_{z'}^2}\left( h_{z'}^2 \| \nabla_{x'} w \|_{L^2(\D_{z'},y^{\alpha})}^2 \right. \\
      &+ \left. h_{\Y}^2 \| \partial_y w \|_{L^2(\D_{z'},y^{\alpha})}^2 \right) 
       \lesssim \interleave w \interleave_{\D_{z'}}^2,
  \end{aligned}
\label{key_step}
\end{equation}
where we used that $|\nabla_{x'} \tilde \varphi_{z'}|  = |\nabla_{x'} \varphi_{z'}|\lesssim h_{z'}^{-1}$
together with \eqref{assumption}.

Set $\psi_{z'} = (w - \Pi_{\T_{\Y}} w) \tilde \varphi_{z'} \in \W(\C_{z'})$ as 
test function in \eqref{local_problem} to obtain
\begin{align*}
  \int_{\C_\Y} y^{\alpha} \nabla e_{\T_\Y} \nabla w & = 
  \sum_{z' \in \N(\T_\Omega)} \int_{\C_{z'}} y^{\alpha} \nabla \zeta_{z'} \nabla \psi_{z'} 
  \lesssim \sum_{z'\in \N(\T_\Omega)} \interleave \zeta_{z'} \! \! \interleave_{\C_{z'}} \interleave w \interleave_{\D_{z'}} \\
  & \lesssim  \left(\sum_{z' \in \N(\T_{\Omega})} \interleave \zeta_{z'} \interleave^2_{\C_{z'}} \right)^{1/2} 
 \| \nabla w \|_{  L^2(y^{\alpha},\C_\Y) }
  = \tilde \E_{\T_{\Y}} \| \nabla w \|_{  L^2(y^{\alpha},\C_\Y)},
\end{align*}
where we used that 
$
\interleave \psi_{z'} \interleave_{\C_{z'}} \lesssim \interleave w \interleave_{\D_{z'}}
$
and the finite overlapping property of the stars $S_{z'}$. 
Since $w$ is arbitrary, we set $w = e_{\T_\Y}$ and obtain \eqref{ideal_bounds1}.

Finally, inequality \eqref{ideal_bounds2} is immediate:
\[
  \tilde \E_{z'}^2 = \interleave \zeta_{z'} \interleave^2_{\C_{z'}} \!= \int_{\C_{z'}} y^{\alpha} \nabla 
  \zeta_{z'} \nabla \zeta_{z'} =  
  \int_{\C_{z'}} y^{\alpha} \nabla e_{\T_\Y} \nabla \zeta_{z'} \leq \| \nabla e_{\T_\Y} \|_{ L^2(y^{\alpha},\C_{z'}) }   
  \interleave \zeta_{z'}  \interleave_{\C_{z'}}.
\]
This concludes the proof.
\end{proof}

\begin{remark}[anisotropic meshes]
Examining the proof of Proposition \ref{pro:ideal_estimator}, we realize
that a critical part of \eqref{key_step} consists of 
the application of inequality \eqref{assumption}, 
which we recall reads: $h_{\Y} \leq C_{\Tr} \, h_{z'}$ for all $z' \in \N(\T_{\Omega})$. Therefore,
Proposition \ref{pro:ideal_estimator} shows how the resolution of local problems on 
cylindrical stars allows for anisotropic meshes 
on the extended variable $y$ and graded meshes in $\Omega$.
The latter enables us to compensate possible singularities in the $x'$-variables.
\end{remark}

\begin{remark}[relaxing the mesh condition]\label{relax_mesh}
Owing to the anisotropy of the mesh, condition \eqref{assumption}
can be violated only near the top of the cylinder. 
Near the bottom of the cylinder, the size of the elements will be much smaller
than $h_{z'}$. If one could prove that the error $v- V_{\T_\Y}$ decays exponentially 
(as the exact solution $v$ does) an examination of the proof of Proposition \ref{pro:ideal_estimator} reveals that condition \eqref{assumption} can be removed. 
Proving this decay, however, requires local pointwise error estimates which are not availabe 
and are currently under investigation.
\end{remark}
\subsection{A computable a posteriori error estimator}
\label{subsec:computable}

Although Proposition~\ref{pro:ideal_estimator} shows that the error estimator 
$\tilde \E_{\T_\Y}$ is ideal, it has an insurmountable drawback: 
for each node $z' \in \N(\T_\Omega)$, it requires knowledge of the exact 
solution $\zeta_{z'}$ to the local problem \eqref{local_problem} 
which lies in the infinite dimensional space $\W(\C_{z'})$. This makes this estimator not 
computable. However, it provides intuition and establishes the basis to define a 
discrete and computable error estimator. To achieve this, let us now define local 
discrete spaces and local computable error indicators, 
on the basis of which we will construct our global error estimator.

\begin{definition}[discrete local spaces]
\label{def:discrete_spaces}
For $z' \in \N(\T_\Omega)$, define the discrete space
\begin{align*}
  \mathcal{W}(\C_{z'}) &= 
    \left\{
      W \in C^0( \bar{\C}_\Y): W|_T \in \mathcal{P}_2(K) \otimes \mathbb{P}_2(I) \ \forall T = K \times I \in \C_{z'}, \right. \\
    &\left.  W|_{\partial \C_{z'} \setminus \Omega \times \{ 0\} } = 0
    \right\}.
\end{align*}
where, if $K$ is a quadrilateral, $\mathcal{P}_2(K)$ stands for $\mathbb{Q}_2(K)$ --- the space of polynomials of degree not larger than $2$ in each variable.
If $K$ is a simplex, $\mathcal{P}_2(K)$ corresponds to 
$\mathbb{P}_2(K) \oplus \mathbb{B}(K)$ where where $\mathbb{P}_2(K)$ stands for 
the space of polynomials of total degree at most $2$, and $\mathbb{B}(K)$ is the space spanned by a local cubic bubble function.
\end{definition}

We then define the discrete local problems: for each cylindrical star $\C_{z'}$ we define 
$\eta_{z'} \in \mathcal{W}(\C_{z'})$ to be the solution of
\begin{equation}
\label{discrete_local_problem}
  \int_{\C_{z'}} y^{\alpha} \nabla\eta_{z'} \nabla W =
  d_s \langle f, \tr W \rangle_{\Hsd \times \Hs}
   -  \int_{\C_{z'}} y^{\alpha} \nabla V_{\T_{\Y}} \nabla W,
\end{equation}
for all $W \in \mathcal{W}(\C_{z'})$. We also define the global error estimator
  \begin{equation}
  \label{comp_global_estimator}
  \E_{\T_{\Y}} = \left( \sum_{z' \in \N(\T_{\Omega}) } \E_{z'}^2 \right)^{\sr},
  \end{equation}
in terms of the local error indicators
  \begin{equation}
  \label{comp_local_estimator}
  \E_{z'} = \interleave \eta_{z'} \! \!  \interleave_{\C_{z'}}.
  \end{equation}

We next explore the connection between the estimator
  \eqref{comp_global_estimator} and the error. We 
first prove a local lower bound for the error without any oscillation term and free of any constant.

\begin{theorem}[localized lower bound]
\label{th:dis_lower_bound}
Let $v \in \HL(y^\alpha,\C_\Y)$ and $V_{\T_\Y} \in \V(\T_\Y)$ solve 
\eqref{alpha_harmonic_extension_weak_T} and \eqref{harmonic_extension_weak} respectively. 
Then, for any $z' \in \N(\T_{\Omega})$, we have 
\begin{equation}
\label{dis_lower_bound}
\E_{z'} \leq\| \nabla(v-V_{\T_{\Y}})\|_{L^2(y^{\alpha},\C_{z'})}.
\end{equation}
\end{theorem}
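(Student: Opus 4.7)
The plan is to follow closely the argument used for the ideal estimator in the proof of \eqref{ideal_bounds2}, exploiting the fact that, by construction, $\mathcal{W}(\C_{z'})\subset \W(\C_{z'})$ and that $\eta_{z'}$ is therefore admissible as a test function both in the discrete local problem \eqref{discrete_local_problem} and, after extension by zero, in the global weak formulation \eqref{alpha_harmonic_extension_weak_T}. No interpolation error or data oscillation will appear, which is why the lower bound is sharp (constant equal to one).

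First, I would choose $W=\eta_{z'}$ in \eqref{discrete_local_problem} to obtain
\[
\E_{z'}^{\,2} = \interleave \eta_{z'} \interleave_{\C_{z'}}^{\,2} = d_s\,\langle f,\tr\eta_{z'}\rangle_{\Hsd\times\Hs} - \int_{\C_{z'}} y^{\alpha}\,\nabla V_{\T_\Y}\cdot\nabla\eta_{z'}.
\]
Next, since $\eta_{z'}$ vanishes on $\partial\C_{z'}\setminus(\Omega\times\{0\})$, its trivial extension by zero to $\C_\Y$ belongs to $\HL(y^{\alpha},\C_\Y)$ and is thus an admissible test function in \eqref{alpha_harmonic_extension_weak_T}. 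Substituting it yields
\[
d_s\,\langle f,\tr\eta_{z'}\rangle_{\Hsd\times\Hs} = \int_{\C_{z'}} y^{\alpha}\,\nabla v\cdot\nabla\eta_{z'},
\]
and subtracting the two identities produces the residual representation
\[
\E_{z'}^{\,2} = \int_{\C_{z'}} y^{\alpha}\,\nabla(v-V_{\T_\Y})\cdot\nabla\eta_{z'}.
\]
Finally, I would apply the Cauchy--Schwarz inequality to bound the right-hand side by $\|\nabla(v-V_{\T_\Y})\|_{L^2(y^{\alpha},\C_{z'})}\,\E_{z'}$ and divide through by $\E_{z'}$ (the case $\E_{z'}=0$ being trivial) to arrive at \eqref{dis_lower_bound}.

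I do not foresee any real obstacle. The argument is completely algebraic and avoids the quasi-interpolation estimates used to prove the upper bound \eqref{ideal_bounds1}; in particular, the mesh assumption \eqref{assumption} is not needed for the lower bound. The structural ingredient that makes everything work is simply the nesting $\mathcal{W}(\C_{z'})\subset\W(\C_{z'})$, which allows $\eta_{z'}$ to serve simultaneously as a test function in \eqref{discrete_local_problem} and, after extension by zero, in \eqref{alpha_harmonic_extension_weak_T}.
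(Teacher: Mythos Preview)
Your proof is correct and is essentially the same as the paper's: both test the discrete local problem \eqref{discrete_local_problem} with $\eta_{z'}$ itself, identify the right-hand side with $\int_{\C_{z'}} y^{\alpha}\nabla(v-V_{\T_\Y})\cdot\nabla\eta_{z'}$ via the global weak formulation, and conclude by Cauchy--Schwarz. You are simply more explicit about the extension-by-zero step that justifies using $\eta_{z'}$ as a test function in \eqref{alpha_harmonic_extension_weak_T}, which the paper leaves implicit.
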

\begin{proof}
The proof repeats the arguments employed to obtain inequality \eqref{ideal_bounds2}.
Let $z' \in \N(\T_\Omega)$, and let $\eta_{z'}$ and $\E_{z'}$ be as in 
\eqref{discrete_local_problem} and \eqref{comp_local_estimator}. Then,
\begin{align*}
  \E_{z'}^2 = \interleave \eta_{z'} \interleave^2_{\C_{z'}} \! = \int_{\C_{z'}} y^{\alpha} \nabla \eta_{z'} \nabla \eta_{z'} =  
  \int_{\C_{z'}} y^{\alpha} \nabla e_{\T_\Y} \nabla \eta_{z'}
  \leq \interleave  e_{\T_\Y} \! \interleave_{\C_{z'}}   
  \interleave \, \eta_{z'}  \interleave_{\C_{z'}},
\end{align*}
which concludes the proof.
\end{proof}

\begin{remark}[strong efficiency]
The oscillation and constant free lower bound \eqref{dis_lower_bound}
implies a strong concept of efficiency: the relative size of 
$\E_{z'}$ dictates mesh refinement regardless of fine structure of the data $f$, 
and thus works even in the pre-asymptotic regime.
\end{remark}

To obtain an upper bound we must assume the following.

\begin{conjecture}[operator $\calMz$]
\label{conj:hdp}
For every $z' \in\N(\T_\Omega)$ there is a linear operator $\calMz: \W(\C_{z'}) \to \mathcal W(\C_{z'})$ such that, for all $w \in \W(\C_{z'})$, satisfies:
\begin{enumerate}[$\bullet$]
  \item For every cell $K \in \T_\Omega$ such that $K \subset S_{z'}$
  \begin{equation}
  \label{eq:def1zint}
    \int_{K \times \{ 0 \} } \tr \left( w - \calMz w \right) = 0,
  \end{equation}
  \item For every cell $T \subset \C_{z'}$ and every $W \in \V(\T_\Y)$
  \begin{equation}
  \label{eq:def2zint}
    \int_T y^\alpha \nabla\left( w - \calMz w \right)\nabla W  = 0.
  \end{equation}
  \item Stability
    \begin{equation}\label{stab:M}
  \interleave \calMz w \interleave_{\C_{z'}} \lesssim \interleave
  w \interleave_{\C_{z'}},
    \end{equation}
    where the hidden constant is independent of the discretization
    parameters but may depend on $\alpha$.
\end{enumerate} 
\end{conjecture}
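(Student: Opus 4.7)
The plan is to construct $\calMz$ as a correction of a weighted Scott--Zhang type quasi-interpolant by local bubble functions that enforce the two orthogonality constraints \eqref{eq:def1zint} and \eqref{eq:def2zint}. First I would introduce a base operator $Q_{z'}\colon \W(\C_{z'}) \to \V(\T_\Y) \cap \W(\C_{z'})$, a weighted Scott--Zhang interpolant onto piecewise (multi)linear functions defined node by node via local $L^2(y^\alpha)$-projections onto selected element patches, in the spirit of \cite{NOS,NOS2}. This base operator preserves the Dirichlet boundary conditions on $\partial \C_{z'} \setminus \Omega \times \{0\}$ and satisfies the local stability estimate $\interleave Q_{z'} w \interleave_{\C_{z'}} \lesssim \interleave w \interleave_{\C_{z'}}$ together with a first-order approximation property in the weighted $L^2$ seminorm.

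Next I would define $\calMz w = Q_{z'} w + \Lambda(w - Q_{z'} w)$, where $\Lambda$ is a sum of local bubble corrections acting cell by cell. For each cell $T = K\times I \subset \C_{z'}$ I introduce an interior bubble $b_T$ with support in $T$ (a cubic bubble on a simplex, or a product of quadratic bubbles on a quadrilateral) and, for cells of the first layer $T = K \times [0,y_1]$, an additional face bubble $b_{K \times \{0\}}$ supported in $T$ and not vanishing on $K \times \{0\}$. The coefficients of these bubbles on $T$ are determined by solving a small local linear system that enforces: first, the orthogonality $\int_T y^\alpha \nabla(w - \calMz w)\nabla W = 0$ for every shape function $W \in \V(\T_\Y)$ whose support meets $T$, using the interior bubble degrees of freedom; second, the trace moment condition $\int_{K \times \{0\}} \tr(w - \calMz w) = 0$, using the face-bubble coefficient on the bottom layer. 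Invertibility of these local systems follows from an explicit computation of weighted bubble matrices, which are spectrally equivalent to their unweighted counterparts on a reference cell with constants depending only on the $A_2$-constant of $y^\alpha$.

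The heart of the proof is then the stability bound \eqref{stab:M}. I would argue cell by cell, combining the local stability of $Q_{z'}$ in the weighted seminorm, standard bubble-function norm equivalences in the weighted $A_2$ setting, and a weighted Friedrichs inequality on the bubble subspace. Concretely, the coefficients of $\Lambda(w - Q_{z'} w)$ on $T$ are linear functionals of moments of $w - Q_{z'} w$ on $T$, and each such moment is bounded by $\interleave w - Q_{z'} w \interleave_T$ via Cauchy--Schwarz and the weighted Poincar\'e inequality of Proposition \ref{pro:poincare}. Summation over cells of $\C_{z'}$ then gives $\interleave \Lambda(w - Q_{z'} w) \interleave_{\C_{z'}} \lesssim \interleave w - Q_{z'} w \interleave_{\C_{z'}} \lesssim \interleave w \interleave_{\C_{z'}}$, yielding \eqref{stab:M} via the triangle inequality.

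The main obstacle will be making these scaling arguments uniform with respect to the anisotropy ratio $h_I/h_K$ and to the cell location near $y = 0$, where the weight degenerates $(s>\sr)$ or blows up $(s<\sr)$. This requires replacing the classical affine scaling to a unit reference cell by an anisotropic scaling that tracks each direction separately, and using the intrinsic $A_2$ structure of $y^\alpha$ in the form of weighted Hardy and trace inequalities (see \cite{FKS:82,GU,Turesson}). A secondary delicate point is the compatibility between the number of moment conditions per cell (one per vertex of $T$ from \eqref{eq:def2zint}, plus one on bottom cells from \eqref{eq:def1zint}) and the available bubble degrees of freedom inside $\mathcal{P}_2(K) \otimes \mathbb{P}_2(I)$; this reduces to a linear-algebra statement on a fixed reference cell that can be verified by direct computation for both simplicial and tensor-product geometries.
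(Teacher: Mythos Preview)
The statement you are attempting to prove is labeled a \emph{Conjecture} in the paper, and the authors say explicitly (in the remark immediately following Theorem~\ref{th:upboundcomp}) that ``the construction of the operator $\calMz$ is an open problem.'' There is therefore no proof in the paper to compare against; the authors only offer the design of the local space $\Wcal(\C_{z'})$ (with the extra cubic bubble in the simplicial case) as circumstantial evidence that enough degrees of freedom are available, together with numerical experiments as indirect support.

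Your strategy --- a Scott--Zhang type base operator corrected by local bubbles to enforce \eqref{eq:def1zint} and \eqref{eq:def2zint} --- is exactly the natural approach, and likely what the authors had in mind when they enriched $\mathcal{P}_2(K)$ by a bubble. But two points in your sketch are not yet a proof. First, the degree-of-freedom count is at best imprecise: condition \eqref{eq:def2zint} on a single cell $T$ imposes orthogonality of $\nabla(w-\calMz w)$ against $\nabla W$ for every $W\in\mathcal{P}_1(K)\otimes\mathbb{P}_1(I)$, which is $2(n+1)-1$ independent constraints per cell, not one; a single interior bubble $b_T$ cannot meet them, so you must actually use several of the quadratic and bubble shape functions of $\Wcal(\C_{z'})$ and verify unisolvence of the resulting (coupled, not cell-local) system. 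Second, and more seriously, the step you flag as ``the main obstacle'' is the whole difficulty: when $h_I/h_K\to 0$ (as it must near $y=0$ by \eqref{graded_mesh}) the inverse and bubble-norm equivalences produce factors of $h_K/h_I$, and the weighted Poincar\'e/trace inequalities you invoke carry factors of $\Y$ or $h_{z'}$, not the anisotropic quantities needed to cancel them. Saying that one should ``replace affine scaling by anisotropic scaling and use the $A_2$ structure'' names the tools but does not show that the constants close up uniformly in the aspect ratio and in the location of $T$ relative to $y=0$. Until that estimate is written down with explicit dependence on $h_I,h_K$ and the local weight, the stability bound \eqref{stab:M} remains conjectural --- which is precisely the status the paper assigns it.
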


We next introduce the so-called \emph{data oscillation}. 
For every $z' \in \N(\T_\Omega)$, 
we define the local data oscillation as
\begin{equation}
\label{eq:defoflocosc}
  \osc_{z'}(f)^2 := d_s h_{z'}^{2s} \| f - f_{z'} \|_{L^2(S_{z'})}^2
\end{equation}
where $f_{z'}|_K \in \R$ is the average of $f$ over $K$, i.e., 
\begin{equation}
 f_{z'}|_K := \fint_{K} f.
\end{equation}
The global data oscillation is defined as
\begin{equation}
\label{eq:defofglobosc}
  \osc_{\T_\Omega}(f)^2 := \sum_{z' \in \N(\T_\Omega)} \osc_{z'}( f)^2 .
\end{equation}
We also define the \emph{total error indicator}
\begin{equation}
\label{total_error}
\tau_{\T_{\Omega}} (V_{\T_{\Y}}, S_{z'}) := \left( \E_{z'}^2 + \osc_{z'}(f)^2 \right)^{1/2} \quad 
\forall z' \in \N(\T_{\Omega}),
\end{equation}
which will be used to mark elements for refinement in the adaptive finite element 
method proposed in Section~\ref{sec:numexp}. Let $\mathscr{K}_{\T_{\Omega}} = \{ S_{z'} : z' \in \N(\T_\Omega)\}$ and,
for any $\mathscr{M} \subset \mathscr{K}_{\T_{\Omega}}$, we set 
\begin{equation}\label{total_est}
\tau_{\T_{\Omega}} (V_{\T_{\Y}}, \mathscr{M} ) := 
 \left( \sum_{S_{z'} \in \mathscr{M} } \tau_{\T_{\Omega}}(V_{\T_{\Y}}, S_{z'})^2 \right)^{1/2}.
\end{equation}

With the aid of the operators $\calMz$ and under the assumption that Conjecture~\ref{conj:hdp} holds, we can bound the
error by the estimator, up to oscillation terms.

\begin{theorem}[global upper bound]
\label{th:upboundcomp}
Let $v \in \HL(\C_\Y,y^\alpha)$ and $V_{\T_\Y} \in \V(\T_\Y)$ 
solve \eqref{alpha_harmonic_extension_weak_T} and \eqref{harmonic_extension_weak}, respectively. 
If $f \in L^2(\Omega)$ and Conjecture~\ref{conj:hdp} holds, then
the total error estimator $\tau_{\T_{\Omega}} (V_{\T_{\Y}},\mathscr{K}_{\T_{\Omega}})$, defined in 
\eqref{total_est} satisfies
\begin{equation}
\label{global_upper_bound}
 \| \nabla (v - V_{\T_\Y}) \|_{L^2(y^\alpha,\C_\Y)} \lesssim 
\tau_{\T_{\Omega}}(V_{\T_\Y},\mathscr{K}_{\T_{\Omega}}).
\end{equation}
\end{theorem}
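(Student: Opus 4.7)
The plan is to follow the structure of the proof of Proposition~\ref{pro:ideal_estimator}, replacing the ideal local correctors $\zeta_{z'}$ by the computable $\eta_{z'}$, and to absorb the resulting inconsistency through the operator $\calMz$ at the cost of data oscillation. First, exactly as in Proposition~\ref{pro:ideal_estimator}, I would take any $w \in \HL(y^\alpha,\C_\Y)$, subtract $W = \Pi_{\T_\Y}w$ via Galerkin orthogonality \eqref{GOadap}, and use the partition of unity \eqref{partitionx} to obtain
\[
\int_{\C_\Y} y^\alpha \nabla e_{\T_\Y} \cdot \nabla w = \sum_{z' \in \N(\T_\Omega)} \mathcal{R}_{z'}(\psi_{z'}),
\]
where $\psi_{z'} := (w-\Pi_{\T_\Y}w)\tilde\varphi_{z'} \in \W(\C_{z'})$ and the local residual is defined by
\[
\mathcal{R}_{z'}(\psi) := d_s \langle f, \tr\psi \rangle_{\Hsd\times\Hs} - \int_{\C_{z'}} y^\alpha \nabla V_{\T_\Y} \cdot \nabla\psi.
\]

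I would then split each local contribution as $\mathcal{R}_{z'}(\psi_{z'}) = \mathcal{R}_{z'}(\calMz\psi_{z'}) + \mathcal{R}_{z'}(\psi_{z'} - \calMz\psi_{z'})$. For the first term, since $\calMz\psi_{z'} \in \mathcal{W}(\C_{z'})$, the definition \eqref{discrete_local_problem} of $\eta_{z'}$, Cauchy--Schwarz and the stability \eqref{stab:M} give
\[
\mathcal{R}_{z'}(\calMz\psi_{z'}) = \int_{\C_{z'}} y^\alpha \nabla \eta_{z'} \cdot \nabla\calMz\psi_{z'} \leq \E_{z'} \interleave \calMz\psi_{z'}\interleave_{\C_{z'}} \lesssim \E_{z'} \interleave \psi_{z'}\interleave_{\C_{z'}}.
\]
For the second term, the cell-by-cell orthogonality \eqref{eq:def2zint} applied with $W = V_{\T_\Y} \in \V(\T_\Y)$ annihilates the bulk part, so only the forcing contribution survives, and \eqref{eq:def1zint} allows me to replace $f$ by $f - f_{z'}$ on each $K\subset S_{z'}$:
\[
d_s \langle f, \tr(\psi_{z'} - \calMz\psi_{z'}) \rangle_{\Hsd\times\Hs} = d_s \sum_{K \subset S_{z'}} \int_K (f - f_{z'})\, \tr(\psi_{z'} - \calMz\psi_{z'}).
\]

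The main obstacle, and the step requiring the most care, is converting this into a bound with the exact $h_{z'}^{s}$-scaling demanded by the definition \eqref{eq:defoflocosc} of $\osc_{z'}(f)$. Since $\tr(\psi_{z'} - \calMz\psi_{z'})$ has vanishing mean on each $K \subset S_{z'}$ by \eqref{eq:def1zint}, I would combine a local fractional Poincar\'e-type inequality with the shape-regular estimate $h_K \approx h_{z'}$, controlling the sum of local fractional seminorms by the global one on $S_{z'}$. The trace inequality \eqref{Trace_estimate} together with the stability \eqref{stab:M} would then bound the outcome by $\interleave \psi_{z'}\interleave_{\C_{z'}}$, yielding
\[
\bigl| d_s \langle f, \tr(\psi_{z'} - \calMz\psi_{z'}) \rangle_{\Hsd\times\Hs} \bigr| \lesssim \osc_{z'}(f) \, \interleave \psi_{z'}\interleave_{\C_{z'}}.
\]

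Combining the two estimates gives $\mathcal{R}_{z'}(\psi_{z'}) \lesssim (\E_{z'}^2 + \osc_{z'}(f)^2)^{1/2} \interleave \psi_{z'}\interleave_{\C_{z'}}$. Summing over $z' \in \N(\T_\Omega)$, applying Cauchy--Schwarz, and using the bound $\interleave \psi_{z'}\interleave_{\C_{z'}} \lesssim \interleave w\interleave_{\D_{z'}}$ already established in \eqref{key_step} of Proposition~\ref{pro:ideal_estimator} together with the finite overlap of the cylindrical patches --- this is precisely where the geometric condition \eqref{assumption} is used --- I would conclude
\[
\int_{\C_\Y} y^\alpha \nabla e_{\T_\Y} \cdot \nabla w \lesssim \tau_{\T_\Omega}(V_{\T_\Y}, \mathscr{K}_{\T_\Omega})\, \| \nabla w\|_{L^2(y^\alpha, \C_\Y)}.
\]
Setting $w = e_{\T_\Y}$ yields \eqref{global_upper_bound}. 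The delicate point remains the $h_{z'}^{s}$-scaling step: because $\Hs(\Omega)$ is defined here spectrally, one must reconcile the local fractional Poincar\'e/trace machinery (classically phrased in Slobodeckij norms) with this definition, which is what makes the argument genuinely rely on the structural Conjecture~\ref{conj:hdp} on $\calMz$ rather than being a routine transcription of the ideal case.
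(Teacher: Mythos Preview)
Your proposal is correct and follows essentially the same approach as the paper's proof: localize via Galerkin orthogonality and the partition of unity, split each local contribution through $\calMz$, handle the $\calMz\psi_{z'}$ part via \eqref{discrete_local_problem} and \eqref{stab:M}, and reduce the remainder to data oscillation using \eqref{eq:def1zint}--\eqref{eq:def2zint}. The only cosmetic difference is that the paper writes the splitting directly for $\int_{\C_{z'}} y^\alpha \nabla e_{\T_\Y}\cdot\nabla\psi_{z'}$ rather than for your residual functional $\mathcal{R}_{z'}$, and in the oscillation step it subtracts a generic piecewise constant $\varrho$ from the trace (exploiting that $f-f_{z'}$ is orthogonal to piecewise constants) and invokes ``a standard interpolation-type estimate'' to obtain the $h_{z'}^{s}$ factor, whereas you phrase this as a fractional Poincar\'e inequality on the mean-zero trace; the two formulations are equivalent, and you are right to flag this $h_{z'}^{s}$-scaling as the delicate step.
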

\begin{proof}
Let $e_{\T_{\Y}}= v - V_{\T_{\Y}}$ denote the error and let
$\psi_{z'} = (w - \Pi_{\T_{\Y}} w) \tilde \varphi_{z'} \in
\W(\C_{z'})$, for any $w \in \HL(y^{\alpha},\C_\Y)$,
where $\Pi_{\T_{\Y}}$ is the quasi-interpolation operator 
introduced in \cite[\S~4]{NOS} and \cite[\S~5]{NOS2}; recall
the estimate $\interleave \psi_{z'} \interleave_{\C_{z'}} \lesssim \interleave w \interleave_{\D_{z'}}$ obtained as part of the proof of Proposition~\ref{pro:ideal_estimator}. Following the proof of Proposition~\ref{pro:ideal_estimator}, we have 
\begin{multline*}
\int_{\C_\Y} y^{\alpha} \nabla e_{\T_\Y} \nabla w 
  =  \sum_{z' \in \N(\T_\Omega)} \int_{\C_{z'}} y^{\alpha} \nabla e_{\T_{\Y}} \nabla \psi_{z'}
 \\
  =  \sum_{z' \in \N(\T_\Omega)} \int_{\C_{z'}} y^{\alpha} \nabla e_{\T_{\Y}} \nabla \calMz \psi_{z'}
   - \sum_{z' \in \N(\T_\Omega)} \int_{\C_{z'}} y^{\alpha} \nabla e_{\T_{\Y}} \nabla (\psi_{z'} - \calMz \psi_{z'}).
\end{multline*}
We now examine each term separately.
First, for every $z' \in \N(\T_\Omega)$ we have $\calMz \psi_{z'} \in
\mathcal{W}(\C_{z'})$, whence
 the definition of the discrete local problem \eqref{discrete_local_problem} yields
 \begin{multline*}
 \sum_{z' \in \N(\T_\Omega)} \int_{\C_{z'}} y^{\alpha} \nabla e_{\T_{\Y}} \nabla \calMz \psi_{z'} 
  = \sum_{z' \in \N(\T_\Omega)} \int_{\C_{z'}} y^{\alpha} \nabla \eta_{z'} \nabla \calMz \psi_{z'} \\
 \leq \left(\sum_{z' \in \N(\T_\Omega)} \E_{z'}^2 \right)^{1/2} \left(\sum_{z' \in \N(\T_\Omega)} 
  \interleave \psi_{z'} \interleave^2_{\C_{z'}} \right)^{1/2}
  \lesssim \E_{\T_\Y} \| \nabla w \|_{L^2(y^\alpha,\C_\Y)},
 \end{multline*}
 where in the last inequality we used the stability assumption
\eqref{stab:M} of the operator $\calMz$, the inequality $\interleave \psi_{z'} \interleave_{\C_{z'}} 
 \lesssim \interleave w \interleave_{\D_{z'}}$, and 
 the finite overlapping property of the stars $\C_{z'}$.
 
Second, for any $z' \in \N(\T_{\Omega})$, we use the conditions \eqref{eq:def1zint} and \eqref{eq:def2zint}
imposed on the operator $\calMz$ to derive
 \[
   \int_{\C_{z'}} y^{\alpha} \nabla e_{\T_{\Y}} \nabla (\psi_{z'} - \calMz \psi_{z'})  = 
   d_s \int_{S_{z'}} (f-f_{z'}) \tr (\psi_{z'} - \calMz \psi_{z'}),
 \]
 where we used that $\nabla V_{\T_\Y}$ is constant on every $T$. Moreover, since $f_{z'}$ is the $L^2(S_{z'})$ projection onto piecewise constants of $f$ we have that, for any $\varrho$ such that $\varrho|_K \in \R$
 \begin{align*}
   \int_{S_{z'}} (f-f_{z'}) \tr (\psi_{z'} - \calMz \psi_{z'}) &=
   \int_{S_{z'}} (f-f_{z'}) \left( \tr (\psi_{z'} - \calMz \psi_{z'}) - \varrho \right) \\
   &\leq 
   \| f - f_{z'} \|_{L^2(S_{z'})} \| \tr (\psi_{z'} - \calMz \psi_{z'}) - \varrho \|_{L^2(S_{z'})}.
 \end{align*}
 After suitably choosing $\varrho$, and using a standard interpolation-type estimate, we get
\[
 \int_{S_{z'}} (f-f_{z'}) \tr (\psi_{z'} - \calMz \psi_{z'}) \lesssim
 h_{z'}^s \| f - f_{z'} \|_{L^2(S_{z'})} \| \tr (\psi_{z'} - \calMz \psi_{z'})\|_{\Hs}.
\]
Consequently,
 \begin{multline*}
   \sum_{z' \in \N(\T_\Omega)} \int_{\C_{z'}} y^{\alpha} \nabla e_{\T_\Y} \nabla (\psi_{z'} - \calMz \psi_{z'}) \leq \\
   \sum_{z' \in \N(\T_\Omega)} C_{\tr} d_s h_{z'}^s \| f - f_{z'} \|_{L^2( S_{z'})} \interleave  \psi_{z'} - \calMz \psi_{z'}  \interleave_{\C_{z'}}  \lesssim  
   \osc_{\T_\Omega}(f)  \| \nabla w \|_{ L^2(y^{\alpha},\C_\Y) },
 \end{multline*}
where we applied the trace inequality \eqref{Trace_estimate}, the estimate \eqref{eq:estCtr} on $C_{\tr}$, 
the stability assumption \eqref{stab:M} of $\calMz$, the bound
$\interleave \psi_{z'} \interleave_{\C_{z'}} \lesssim \interleave w \interleave_{\D_{z'}}$, and the 
finite overlapping property of the stars $\C_{z'}$.

Collecting the above estimates, we obtain the asserted bound
  \eqref{global_upper_bound}.
\end{proof}

\begin{remark}[role of oscillation]
\label{remark:osc}
The Definition \ref{def:discrete_spaces} of $\Wcal(\C_{z'})$ is meant
to provide enough degrees of freedom for existence of 
the operator $\calMz$ satisfying \eqref{eq:def1zint}--\eqref{stab:M}.
This leads to a \emph{solution free} oscillation term
\eqref{eq:defoflocosc}. Otherwise, if we were not able to impose
\eqref{eq:def2zint}, then the oscillation \eqref{eq:defoflocosc}
should be supplemented by the term
\[
\osc_{z'}(V_{\T_\Y}) := \| y^\alpha \GRAD V_{\T_\Y} - \bsigma_{z'} \|_{ L^2(y^{-\alpha},\C_{z'}) },
\]
with $\bsigma_{z'}$ being the local average of $y^\alpha \GRAD
V_{\T_\Y}$, for \eqref{global_upper_bound} to be valid. 
This term cannot be guaranteed to be of higher order
due to the presence of the weight $y^{-\alpha}$. In fact, 
computations show that, for $s>\tfrac12$ (or $\alpha<0$),
the magnitude of $\osc_{z'}(V_{\T_\Y})$ 
is of lower order than the actual error estimator $\E_{z'}$ unless
a stronger mesh grading in the extended direction is employed to 
control this term. This can be achieved, for instance, by taking the 
largest grading parameter $\gamma$ in \eqref{graded_mesh} 
for $\pm\alpha$, namely $\gamma>3/(1-|\alpha|)$. Numerical
experiments show no degradation of the convergence rate, expressed in terms of
degrees of freedom, for the ensuing meshes $\T_\Y$ with stronger
anisotropic refinement.
\end{remark}

\begin{remark}[construction of $\calMz$]
The construction of the operator $\calMz$ is an open problem. We design the local space $\Wcal(\C_{z'})$ in order
to provide enough degrees of freedom for the existence of the operator $\calMz$ satisfying \eqref{eq:def1zint}--\eqref{stab:M}.
The numerical experiments of Section \ref{sec:numexp} provide consistent computational evidence that the upper bound \eqref{global_upper_bound} is valid without $\osc_{z'}(V_{\T_\Y})$, and thus indirect evidence of the existence of $\calMz$ with the requisite properties \eqref{eq:def1zint}--\eqref{stab:M}.
\end{remark}

\section{Numerical experiments}
\label{sec:numexp}
Here we explore computationally the performance and limitations of the a posteriori error 
estimator introduced in \S\ref{subsec:computable} with a series of 
test cases.
To do so, we start by formulating an adaptive finite element method (AFEM) based on iterations of the loop
\begin{equation}
 \textsf{\textup{SOLVE}} \rightarrow \textsf{\textup{ESTIMATE}} \rightarrow \textsf{\textup{MARK}} \rightarrow \textsf{\textup{REFINE}}.
\label{afem}
\end{equation}

\subsection{Design of AFEM}
The modules in \eqref{afem} are as follows:
\begin{enumerate}[$\bullet$]
 \item \textsf{\textup{SOLVE}}: Given a mesh $\T_{\Y}$ we compute the Galerkin solution of \eqref{alpha_harmonic_extension_weak_T}:
\[
 V_{\T_{\Y}} = \textsf{\textup{SOLVE}}(\T_{\Y}).
\]

\item \textsf{\textup{ESTIMATE}}: Given $V_{\T_\Y}$ we calculate 
the local error indicators \eqref{comp_local_estimator} and the local oscillations \eqref{eq:defoflocosc}
to construct  the total error indicator of \eqref{total_error}:
\[
 \left\{ \tau_{\T_{\Omega}} (V_{\T_{\Y}}, S_{z'}) \right\}_{ S_{z'} \in \mathscr{K}_{\T_\Omega}} = \textsf{\textup{ESTIMATE}}(V_{\T_{\Y}}, \T_{\Y}).
\]

\item \textsf{\textup{MARK}}: Using the so-called D\"{o}rfler marking \cite{MR1393904} (bulk chasing strategy) 
with parameter $0< \theta \leq 1$, we select a set 
\[
  \mathscr{M} = \textsf{\textup{MARK}}( \left\{ \tau_{\T_{\Omega}} (V_{\T_{\Y}}, S_{z'}) \right\}_{ S_{z'} \in \mathscr{K}_{\Omega}}, V_{\T_{\Y}} ) 
  \subset \mathscr{K}_{\T_{\Omega}}
\]
of minimal cardinality that satisfies
\[
  \tau_{\T_{\Omega}}( V_{\T_{\Y}},\mathscr{M})  \geq \theta \tau_{\T_{\Omega}}(V_{\T_{\Y}},  \mathscr{K}_{\T_{\Omega}}). 
\]

\item \textsf{\textup{REFINE}}: We generate a new mesh $\T_\Omega'$ by bisecting all the elements 
$K \in \T_{\Omega}$ contained in $\mathscr{M}$ based on the newest-vertex bisection method; see \cite{NV,NSV:09}. 
We choose the truncation parameter as $\Y = 1 + \tfrac{1}{3}\log(\# \T_{\Omega}')$ to balance the approximation and truncation errors; see \cite[Remark 5.5]{NOS}.
We construct the mesh $\mathcal{I}_\Y'$ by the rule \eqref{graded_mesh}, 
with a number of degrees of freedom $M$ 
sufficiently large so that condition \eqref{assumption} holds.
This is attained by first creating a partition 
$\mathcal{I}_\Y'$ with $M \approx (\# \T_\Omega')^{1/n}$
and checking \eqref{assumption}. If this condition is violated, we increase 
the number of points until we get the desired result. The new mesh
\[
  \T_\Y' = \textsf{\textup{REFINE}}(\mathscr{M}),
\]
is obtained as the tensor product of $\T_\Omega'$ and $\mathcal{I}_\Y'$.
\end{enumerate}

\subsection{Implementation}
The AFEM \eqref{afem} is implemented within
the MATLAB$^\copyright$ software library {\it{i}}FEM~\cite{Chen.L2008c}.
The stiffness matrix of the discrete system \eqref{harmonic_extension_weak} is assembled exactly, and
the forcing boundary term is computed by a quadrature formula which is exact for polynomials of degree $4$. 
The resulting linear system is solved by using the multigrid method with line smoother introduced
and analyzed in~\cite{CNOS}.

To compute the solution $\eta_{z'}$ to the discrete local problem
\eqref{discrete_local_problem}, we loop around each node $z' \in \N(\T_{\Omega})$, 
collect data about the cylindrical star $\C_{z'}$ and assemble the small linear
system \eqref{discrete_local_problem} which is solved by the built-in 
\emph{direct solver} of MATLAB$^\copyright$. 
All integrals involving only the weight and discrete functions
are computed exactly, whereas those also involving data functions are computed
element-wise by a quadrature formula which is exact for polynomials of degree 7. 

For convenience, in the \textsf{\textup{MARK}} step
we change the estimator from star-wise to element-wise 
as follows: We first scale the nodal-wise estimator as $\E_{z'}^2 / (\# S_{z'} )$ and then,
for each element $K\in \T_{\Omega}$, we compute 
\[
\E_{K}^2 := \sum_{z'\in K}\E_{z'}^2. 
\]
The scaling is introduced so that 
$\sum _{K \in \T_{\Omega} }\E_{K}^2 = \sum _{z' \in \N(\T_\Omega) } \E_{z'}^2$. 
The cell-wise data oscillation is now defined as
\[
\osc_{K}(f)^2 := d_sh_{K}^{2s}\|f - \bar f_K\|^2_{L^2(K)}, 
\]
where $\bar f_K$ denotes the average of $f$ over the element $K$. This quantity is 
computed using a quadrature formula which is exact for polynomials of degree 7. 

Unless specifically mentioned, all computations are done without 
explicitly enforcing the mesh restriction \eqref{assumption}, which shows that this is nothing
but an artifact in our proofs; see Remark \ref{relax_mesh}. How to remove this assumption is currently under investigation. 
Nevertheless, computations (not shown here for brevity) show that optimality is still retained 
if one imposes \eqref{assumption}. 

For the cases where the exact solution to problem \eqref{harmonic_extension_weak} is available,
the error is computed by using the identity
\begin{equation*}
\| \nabla(v-V_{\T_{\Y}})\|_{L^2(y^{\alpha},\C_{\Y})}^2 = d_s\int_{\Omega}f \tr (v-V_{\T_{\Y}}),
\end{equation*}
which follows from Galerkin orthogonality and integration by parts. 
Thus, we avoid evaluating the singular/degenerate weight $y^{\alpha}$
and reduce the computational cost. The right hand side of the equation above is  
computed by a quadrature formula which is exact for polynomials of degree 7.
On the other hand, if the exact solution is not available, we introduce the energy
\[
E(w) = \frac{1}{2}\int_{\mathcal \C_{\Y}} y^{\alpha}| \nabla w |^2  - d_s \langle f, \tr w \rangle_{\Hsd \times \Hs}.
\]
where $w \in \HL(y^{\alpha},\C_{\Y})$ and $f \in \Hsd$. Consequently, Galerkin orthogonality implies
$$
E(V_{\T_{\Y}}) - E(v) = \frac{1}{2}\| \nabla(v-V_{\T_{\Y}})\|_{L^2(y^{\alpha},\C_{\Y})}^2,
$$
where $v \in  \HL(y^{\alpha},\C_{\Y})$ and $V_{\T_{\Y}} \in \V(\T_{\Y})$ denote the solution
to problems \eqref{alpha_harmonic_extension_weak_T} and \eqref{harmonic_extension_weak}, respectively.
We remark that for a discrete function $W$ the energy 
$E(W)$ can be computed simply using a matrix-vector product.
Then, an approximation of the error in the energy norm can be obtained by computing 
\[
\left( 2( E(V_{\T_{\Y}}) - E(V_{\T_{\Y}^*}) ) \right)^{\tfrac{1}{2}},
\]
where $V_{\T_{\Y}^*}$ is the Galerkin approximation to $v$ on a very fine grid 
$\T_{\Y}^{*}$, which is obtained by a uniform refinement of the last adaptive mesh.

We also compute the \emph{effectivity index} of our a posterior error estimator defined as the ratio of the estimator and the true error, \ie 
\[
\frac{\tau_{\T_{\Omega}}(V_{\T_\Y},\mathscr{K}_{\T_{\Omega}})}{\| \nabla(v-V_{\T_{\Y}})\|_{L^2(y^{\alpha},\C_{\Y})}},
\]
as well as the \emph{aspect ratio} of elements $T$
\[
 \frac{h_K}{h_{I}} \qquad \forall T = K \times I \in \T_{\Y}.
\]
 
\subsection{Smooth and compatible data}
\label{sub:smoothcompatible}

The purpose of this numerical example is to show how the error estimator
\eqref{comp_global_estimator}--\eqref{comp_local_estimator} based on the solution
of local problems on anisotropic cylindrical stars allows us to recover optimality of our AFEM. 
We recall that adaptive isotropic refinement cannot be optimal; see \S \ref{subsec:isotropic}.
We consider $\Omega = (0,1)^2$ and $f(x^1,x^2)= \sin (2\pi x^1) \sin (2 \pi x^2)$. 
Then, the exact solution to \eqref{fl=f_bdddom} is given by $u(x^1,x^2) = 8^{-s}\pi^{2s}\sin (2\pi x^1) \sin (2 \pi x^2)$.

The asymptotic relation $\|\nabla( \ue - V_{\T_{\Y}})\|_{L^2(y^{\alpha},\C_{\Y})} 
\approx \#(\T_\Y)^{-1/3}$
is shown in Figure~\ref{fig:smooth_function} which illustrates the quasi-optimal 
decay rate of our AFEM driven by the error estimator 
\eqref{comp_global_estimator}--\eqref{comp_local_estimator} for all choices of the parameter $s$ considered. 
These examples show that anisotropy in the extended dimension is essential to recover optimality of our AFEM. 

\begin{figure}[ht]
  \begin{center}
  \includegraphics[width=0.49\textwidth]{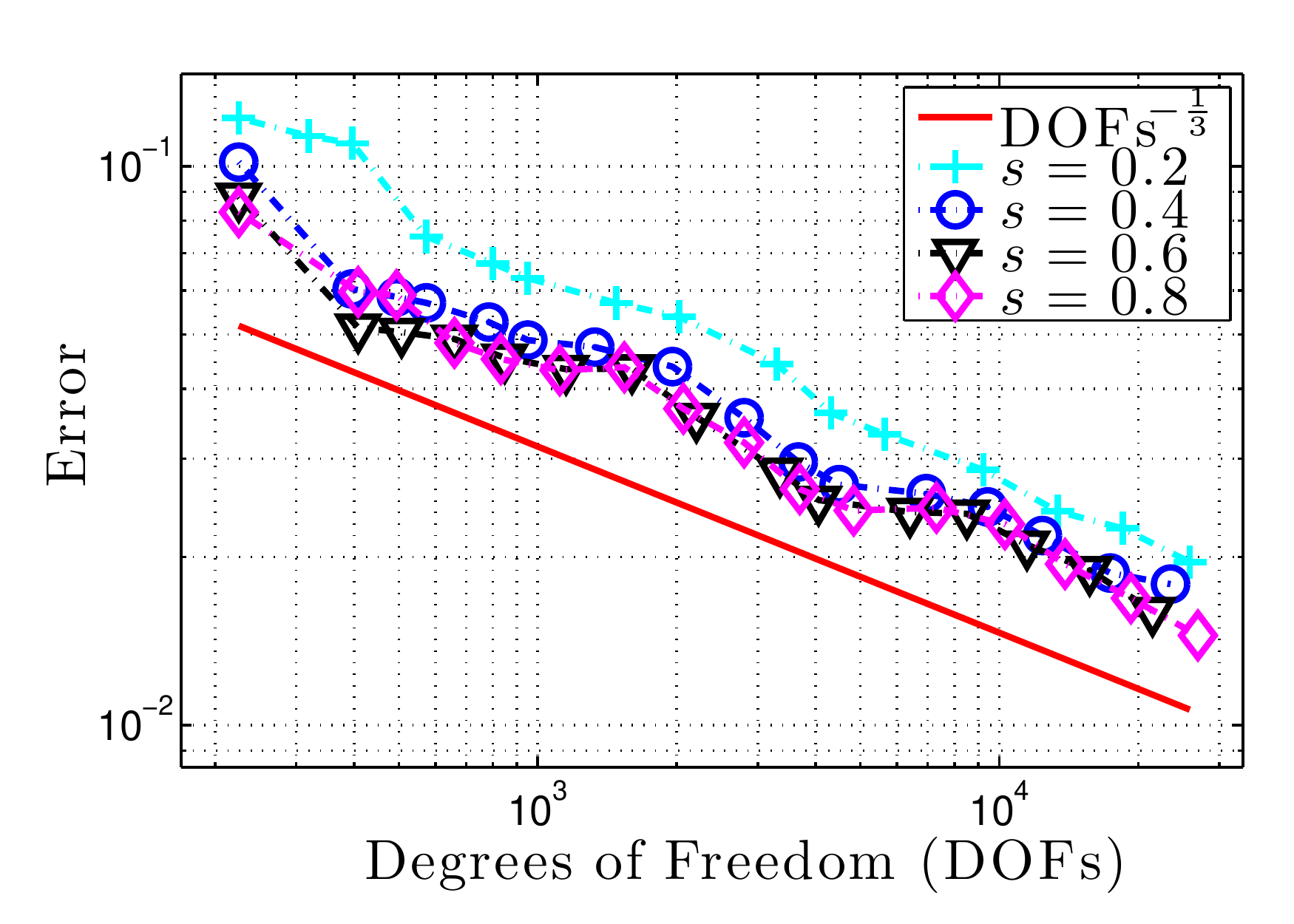}
  \hfil
  \includegraphics[width=0.49\textwidth]{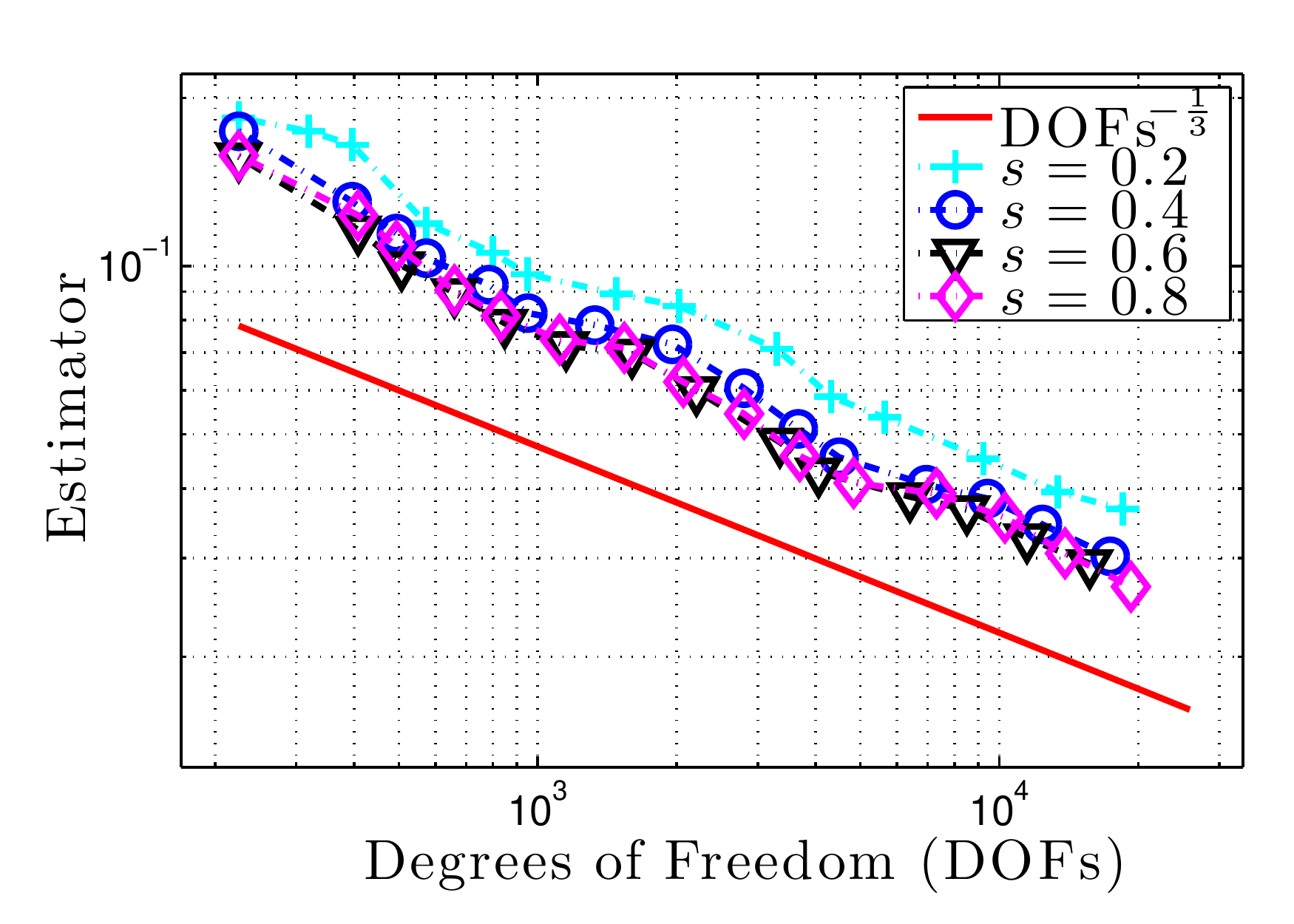}
  \end{center}
  \caption{Computational rate of convergence for our anisotropic AFEM on the
smooth and compatible right hand side of \S~\ref{sub:smoothcompatible} for $n=2$ and $s=0.2$, $0.4$, $0.6$ and $s=0.8$. The left panel 
shows the decrease of the error with respect to the number of degrees of freedom, whereas the right one 
that for the total error estimator. In all cases we recover the optimal rate $\# (\T_{\Y})^{-1/3}$. 
The aspect ratios, averaged over $x'$, of the cells on the bottom layer $[0,y_1]$  in the finest mesh are: $1.65\times 10^{11}$, $6.30\times 10^{4}$, $396$ and $36.2$, respectively.
The average effectivity indices are $1.47, 1.61, 1.61, 1.62$, respectively.
}
\label{fig:smooth_function}
\end{figure}

\subsection{Smooth but incompatible data}
\label{sub:smoothincompatible}

The numerical example presented in \cite[\S6.3]{NOS} shows that the results of 
Theorem~\ref{TH:fl_error_estimates} are sharp in the sense that the regularity 
$f\in\Ws$ is indeed necessary to obtain an optimal rate of convergence with a 
quasiuniform mesh in the $x'$-direction. The heuristic explanation for this is 
that a certain compatibility between the data and the boundary condition is necessary. 
The results of \cite[\S6.3]{NOS} also show that, in some simple cases, one can guess 
the nature of the singularity that is introduced by this incompatibility and a priori 
design a mesh that captures it, thus recovering the optimal rate of convergence. 
Evidently, this is not possible in all cases and here we show that the a 
posteriori error estimator \eqref{comp_global_estimator}--\eqref{comp_local_estimator} 
automatically produces a sequence of meshes that yield the optimal rate of convergence.

We consider $\Omega = (0,1)^2$ and $f=1$. Then, for $s < \tfrac12$, we have that $f\notin \Ws$ 
whence we cannot invoke the results of Theorem~\ref{TH:fl_error_estimates}. Nevertheless, as 
the results of Figure~\ref{fig:incompatible} show, we recover the optimal rate of convergence.

\begin{figure}[ht]
  \begin{center}
  \includegraphics[width=0.49\textwidth]{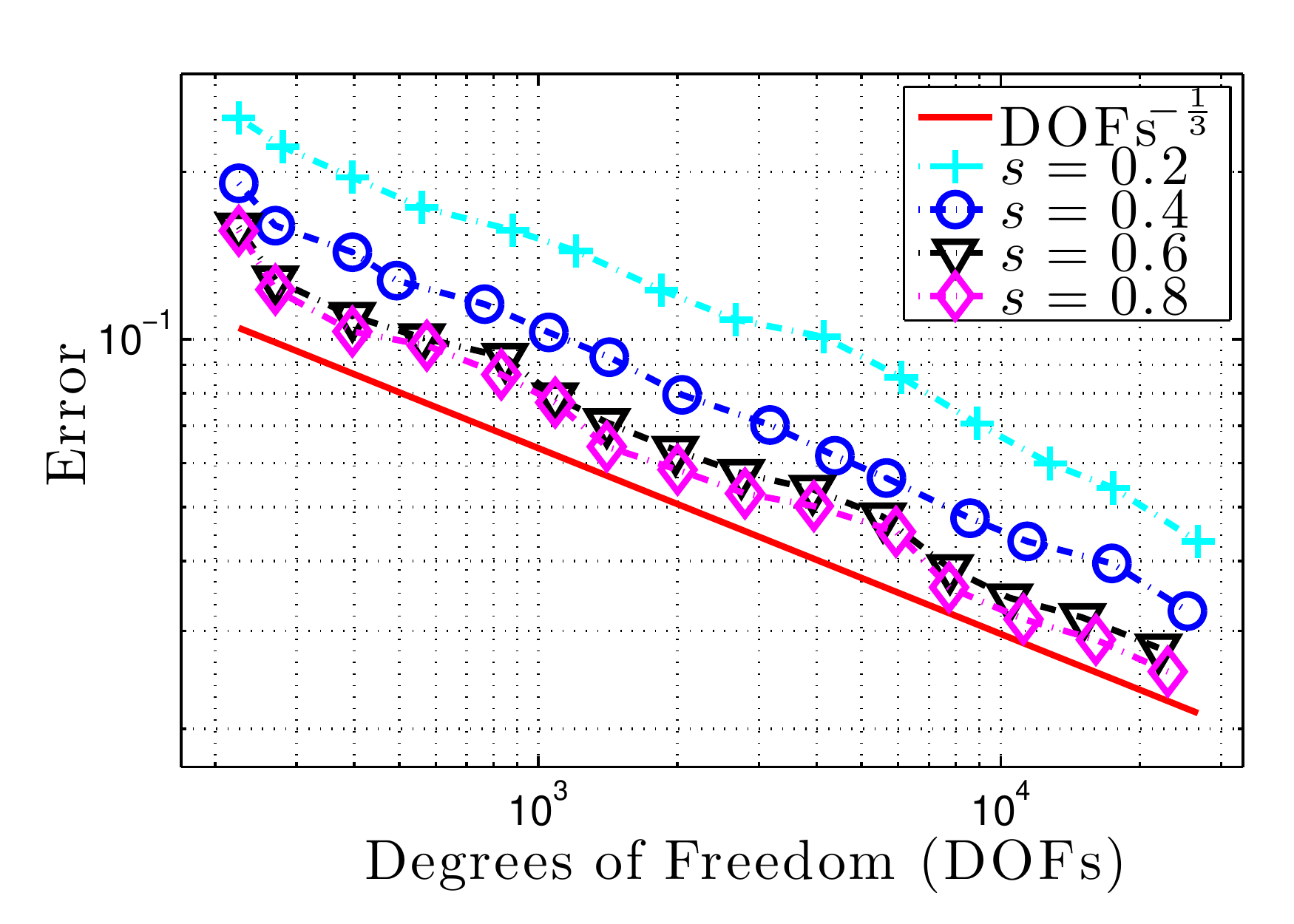}
  \hfil
  \includegraphics[width=0.49\textwidth]{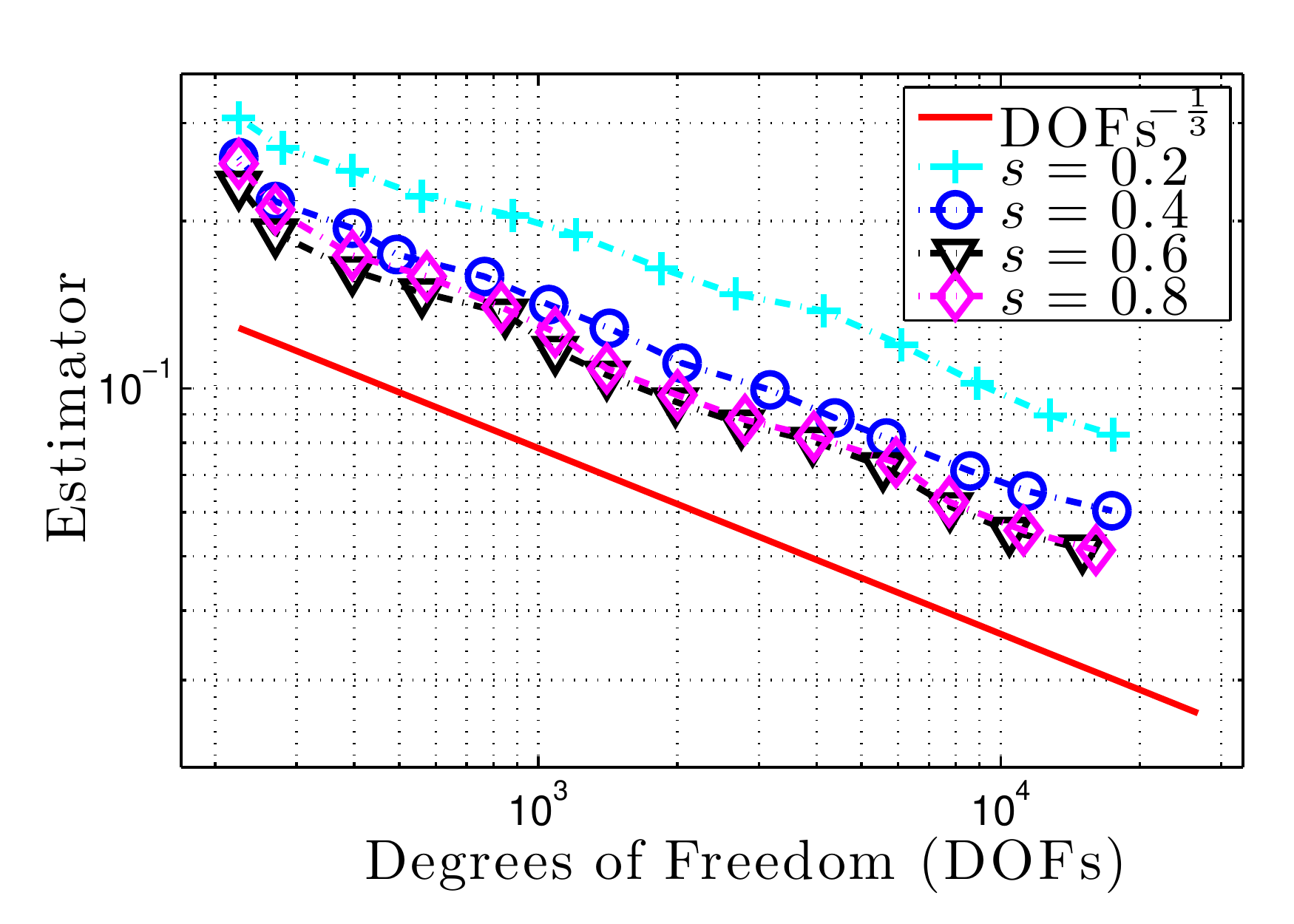}
  \end{center}
  \caption{Computational rate of convergence for our anisotropic AFEM on the
smooth but incompatible right hand side of \S~\ref{sub:smoothincompatible} for $n=2$ and $s=0.2$, $0.4$, $0.6$ and $s=0.8$. 
The left panel shows the decrease of the error with respect to the number of degrees of freedom, 
whereas the right one that for the total error indicator. 
In all cases we recover the optimal rate $\# (\T_{\Y})^{-1/3}$. Notice that, for $s<\tfrac12$, 
the right hand side $f=1\notin \Ws$ and so a quasiuniform mesh in $\Omega$ does not deliver the optimal 
rate of convergence \cite[\S6.3]{NOS}.
The aspect ratios, averaged over $x'$, of the cells on the bottom layer $[0,y_1]$  in the finest mesh are: $2.09\times 10^{11}$, 
$6.03\times 10^{4}$, $387$ and $33.4$, respectively.
The average effectivity indices are $1.34$, $1.41$, $1.51$, $1.67$, respectively.}
\label{fig:incompatible}
\end{figure}

\subsection{L-shaped domain with compatible data}
\label{sub:Lshapedcompatible}

The result of Theorem~\ref{TH:fl_error_estimates} relies on the assumption that the Laplace operator 
$-\Delta_{x'}$ on $\Omega$ supplemented with Dirichlet boundary conditions possesses optimal smoothing properties, 
\ie \eqref{reg_Omega}. As it is well known \cite{Grisvard}, \eqref{reg_Omega} holds when 
$\Omega$ is a convex polyhedron. Let us then consider the case when this condition is not met.

We consider the classical L-shaped domain $\Omega = (-1,1)^2\setminus(0,1)\times(-1,0)$ 
and $f(x^1,x^2)=\sin(2\pi x^1)\sin(\pi x^2)$, which is a smooth and compatible right hand side, i.e., $f = 0$ on $\partial \Omega$, for 
all values of $s\in(0,1)$. The results of Figure~\ref{fig:Lshapedcompatible} show the 
experimental rates of convergence and confirm that our AFEM is able to 
capture the singularity that the reentrant corner in the domain introduces and recover 
the optimal rate of convergence. As the exact solution of this problem is not known, 
we compute the rate of convergence by comparing the computed solution with one obtained 
on a very fine mesh.

\begin{figure}[ht]
  \begin{center}
  \includegraphics[width=0.49\textwidth]{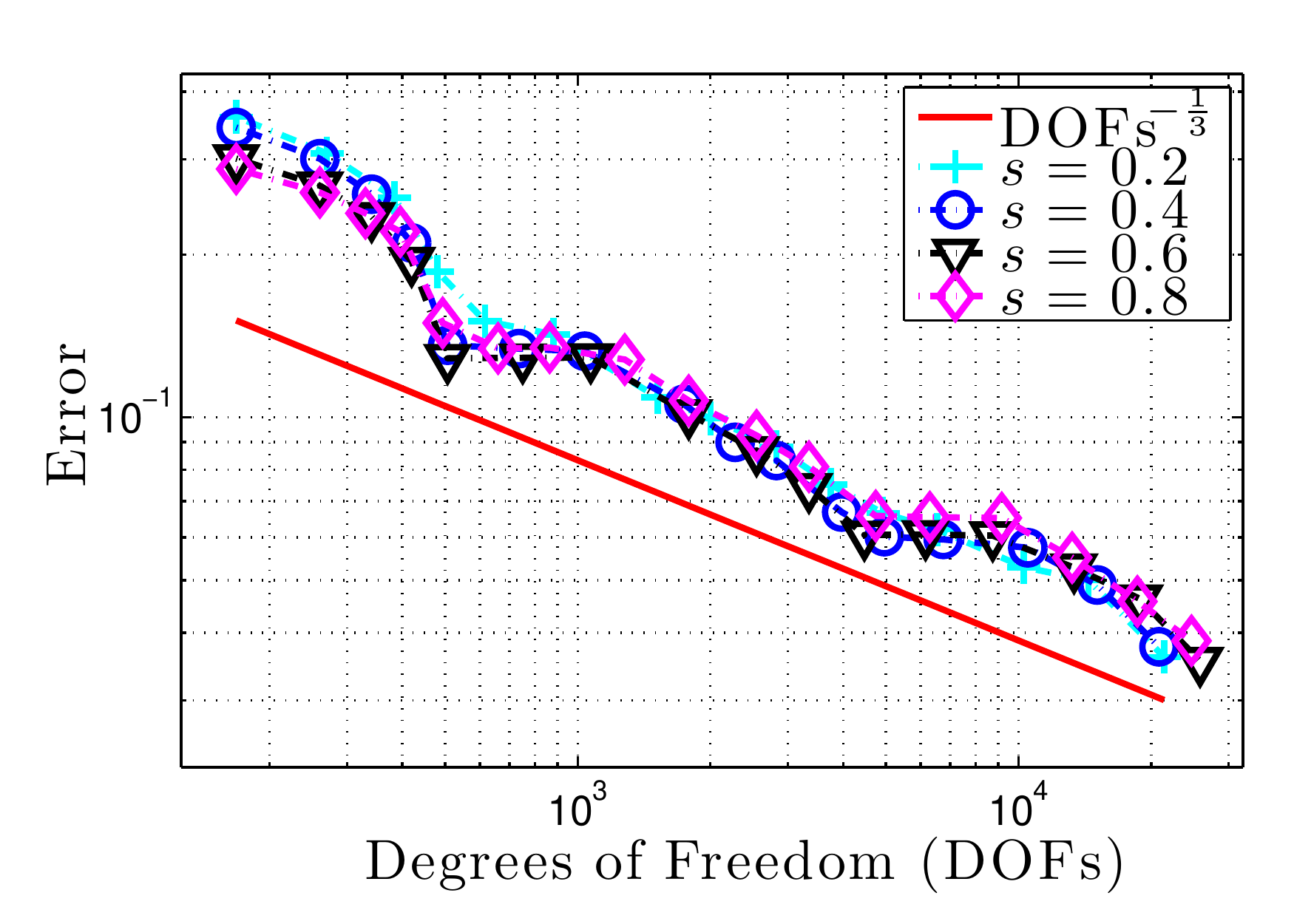}
  \hfil
  \includegraphics[width=0.49\textwidth]{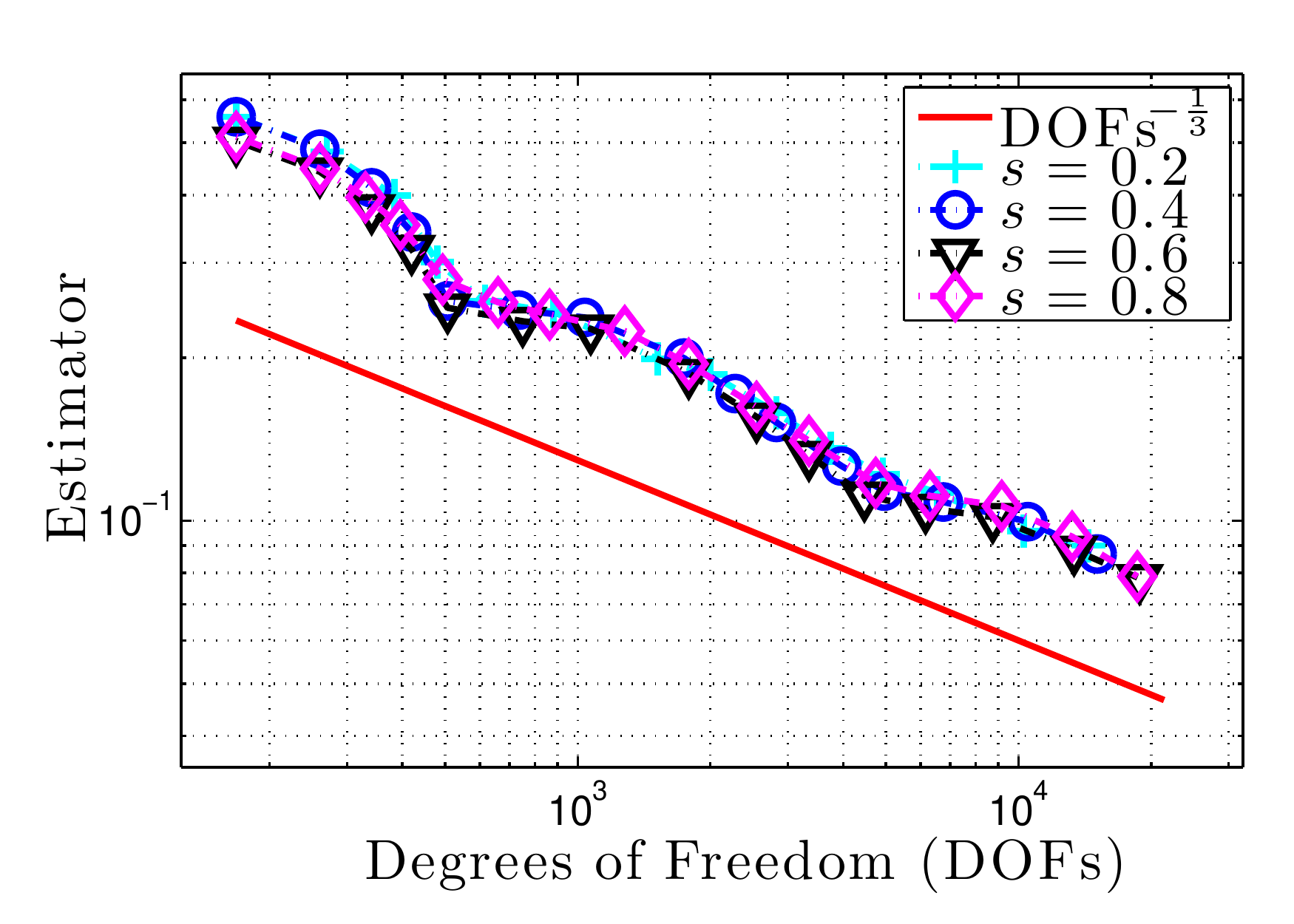}
  \end{center}
  \caption{Computational rate of convergence for our anisotropic AFEM on the smooth and 
compatible right hand side over an L-shaped domain of \S~\ref{sub:Lshapedcompatible} for $n=2$ and $s=0.2$, $0.4$, $0.6$ and $s=0.8$.
The left panel shows the decrease of the error with respect to the number of degrees of freedom, whereas the right 
one that for the total error indicator. 
In all cases we recover the optimal rate $\# (\T_{\Y})^{-1/3}$. As the exact solution of this problem is not known, 
we compute the rate of convergence by comparing the computed solution with one obtained on a very fine mesh.
The aspect ratios, averaged over $x'$, of the cells on the bottom layer $[0,y_1]$ in the finest mesh are: $2.56\times 10^{11}$, 
$1.20\times 10^{5}$, $667$ and $56.7$, respectively.
The average effectivity indices are $1.73$, $1.77$, $1.73$, $1.74$, respectively.}
\label{fig:Lshapedcompatible}
\end{figure}

\subsection{L-shaped domain with incompatible data}
\label{sub:Lshapedincompatible}

Let us combine the singularity introduced by the 
data incompatibility of \S~\ref{sub:smoothincompatible}
and the reentrant corner explored in 
\S~\ref{sub:Lshapedcompatible}. 
To do so, we again consider the L-shaped domain $\Omega = (-1,1)^2\setminus(0,1)\times(-1,0)$ 
and $f=1$. As the results of Figure~\ref{fig:Lshapedincompatible} show, we again recover the 
optimal rate of convergence for all possible cases of $s$. As the exact solution of this problem is not known, 
we compute the rate of convergence by comparing the computed solution with one obtained 
on a very fine mesh.

\begin{figure}[ht]
  \begin{center}
  \includegraphics[width=0.49\textwidth]{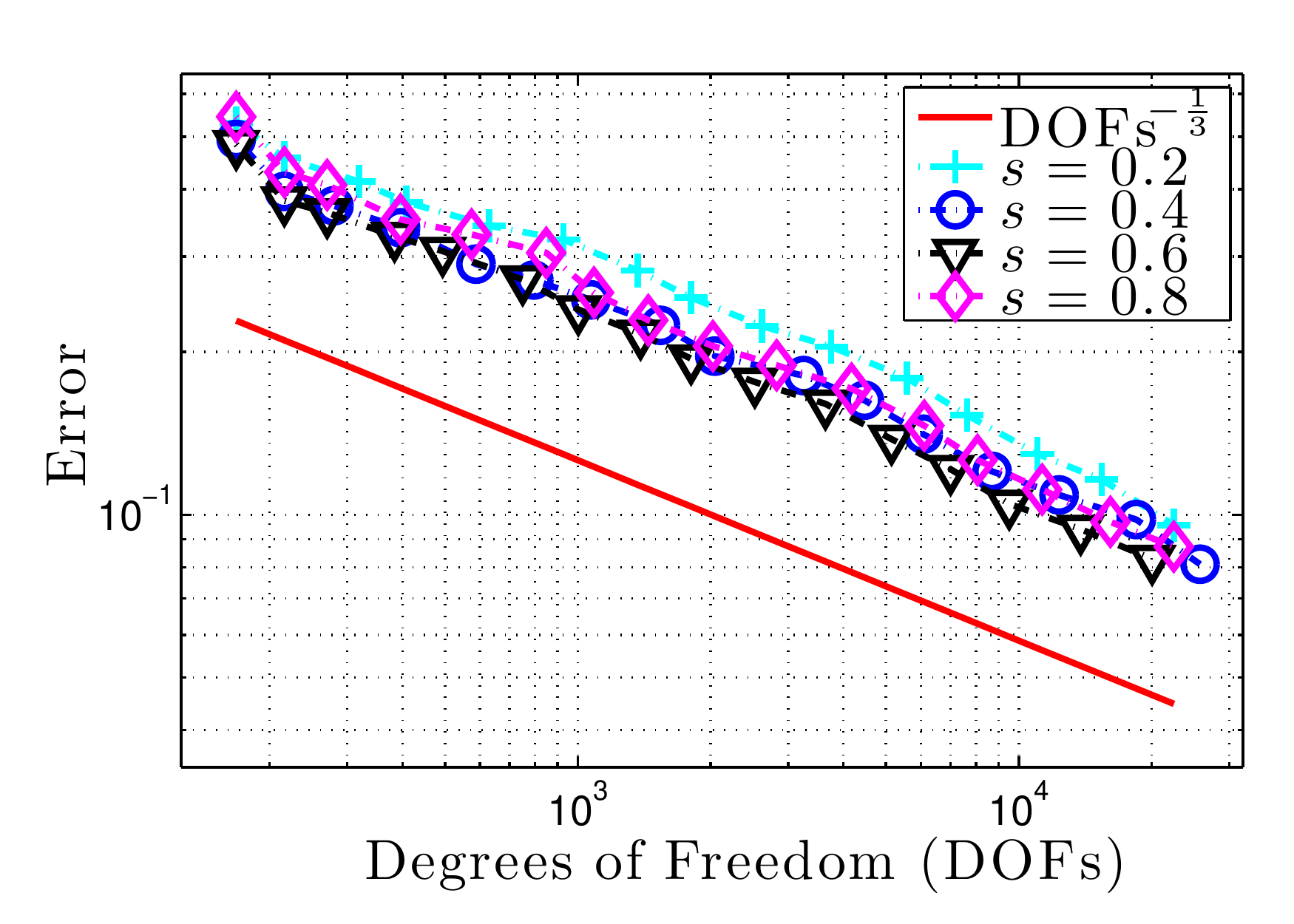}
  \hfil
  \includegraphics[width=0.49\textwidth]{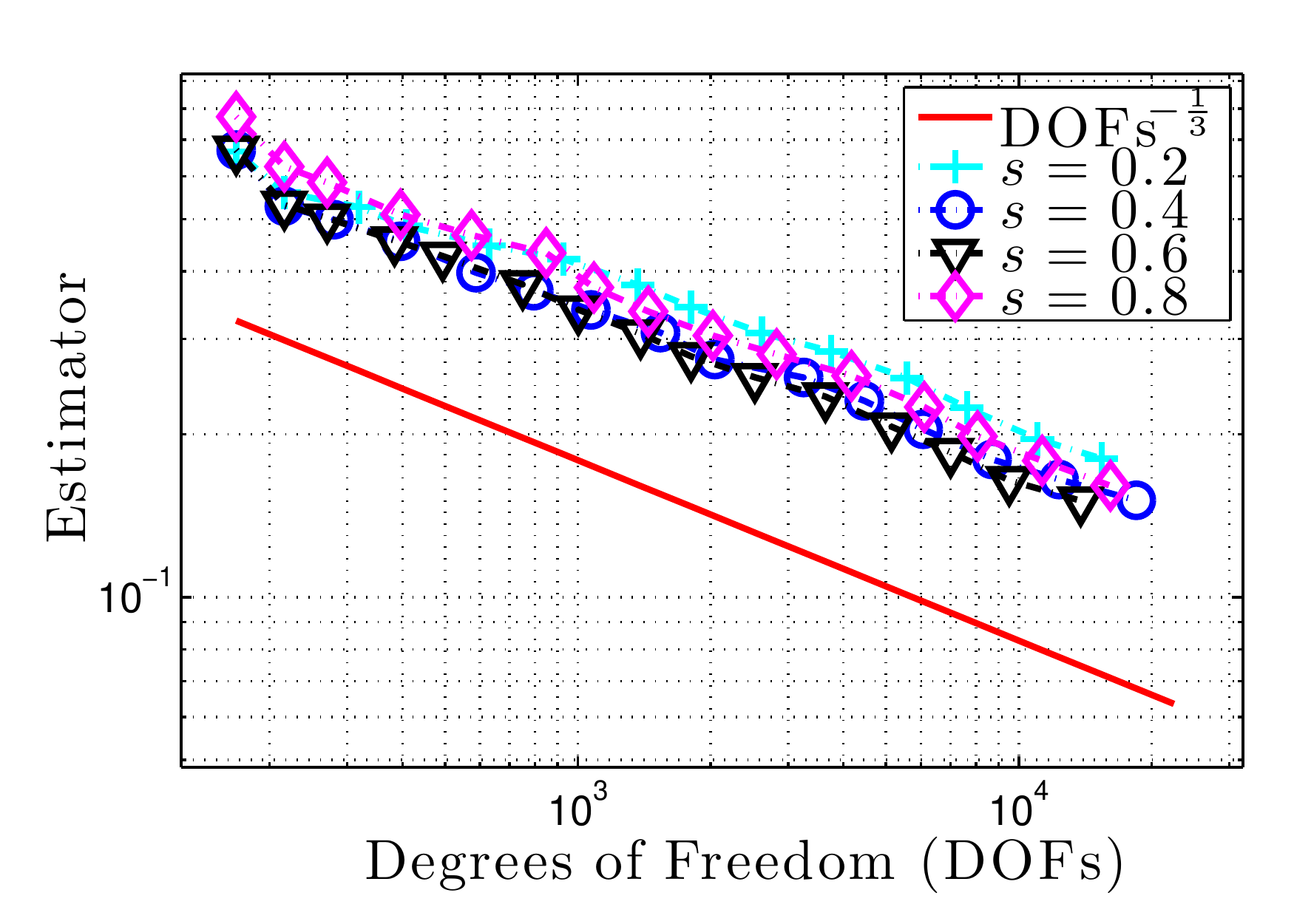}
  \end{center}
  \caption{Computational rate of convergence for our anisotropic AFEM on the smooth but 
incompatible right hand side over an L-shaped domain of \S~\ref{sub:Lshapedincompatible} for $n=2$ and $s=0.2$, $0.4$, $0.6$ and $s=0.8$. 
The left panel shows the decrease of the error with respect to the number of degrees of freedom, 
whereas the right one that for the total error indicator. 
In all cases we recover the optimal rate $\# (\T_{\Y})^{-1/3}$. As the exact solution of this problem is not 
known, we compute the rate of convergence by comparing the computed solution with one obtained 
on a very fine mesh.
The aspect ratios, averaged over $x'$, of the cells on the bottom layer $[0,y_1]$ in the finest mesh are: $2.24\times 10^{11}$, 
$1.02\times 10^{5}$, $632$ and $55.6$, respectively.
The average effectivity indices are $1.36$, $1.40$, $1.44$, $1.49$, respectively.}
\label{fig:Lshapedincompatible}
\end{figure}

We display some meshes in Figure~\ref{fig:meshLshapedincompatible}. As expected, when $s<\tfrac12$ the data 
$f=1$ is incompatible with the equation and this causes a boundary layer on the solution. To capture it, 
the AFEM refines near the  boundary. In contrast, when $s>\tfrac12$ the refinement is only 
near the reentrant corner, since there is no boundary layer anymore.

\begin{figure}[ht]
  \begin{center}
  \includegraphics[width=0.25\textwidth]{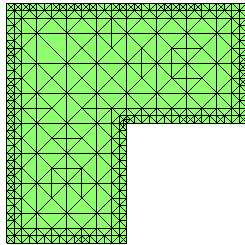}
  \hfil
  \includegraphics[width=0.25\textwidth]{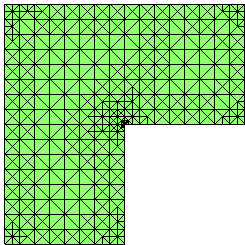}
  \end{center}
  \caption{Adaptively graded mesh for an L-shaped domain with incompatible data (see \S~\ref{sub:Lshapedincompatible}): $s=0.2$ (left) and 
  $s = 0.8$ (right). As expected, when $s<\tfrac12$ the data $f=1$ is incompatible with the equation 
  and this causes a boundary layer on the solution. To capture it,
  our AFEM refines near the 
  boundary. In contrast, when $s>\tfrac12$ the refinement is only near the reentrant angle, since there is no boundary layer anymore.}
\label{fig:meshLshapedincompatible}
\end{figure}

\subsection{Discontinuous coefficients}
\label{sub:disccoeff}

In this example we compute the fractional powers of a general elliptic operator with 
piecewise discontinuous coefficients. We invoke the formulas derived by
Kellogg \cite{Kellog} (see also \cite[\S5.4]{NV}) to construct an exact solution to the particular case:
$\Omega = (-1,1)^2$, $f(x^1,x^2)=( (x^1)^2 - 1)( (x^2)^2-1)$  and
\[
  \calL w = -\DIV_{x'}(A \nabla_{x'}w),
\]
with
\[
  A = \begin{dcases}
        \varrho\mathbb{I}, & x^1x^2 >0, \\
        \mathbb{I}, & x^1x^2 \leq 0,
      \end{dcases}
\]
where $\mathbb{I}$ denotes the identity tensor and $\varrho=161.4476387975881$. 
This problem is well known as a pathological case, where the solution is \emph{barely in} $H^1_0(\Omega)$.

On the other hand, as the results of 
\cite[\S7]{NOS} show, the problem: find $u$ such that
\[
  \calL^s u = f,\  \text{in } \Omega \qquad u|_{\partial\Omega} = 0
\]
also admits an extension, which can be discretized with the techniques and ideas presented in 
\cite[\S7]{NOS}. We can also write an a posteriori error estimator based on cylindrical 
stars and design an adaptive loop. 
Figure~\ref{fig:Kellog} demonstrates that the associated decay rate is optimal: 
$(\T_{\Y})^{-1/3}$ for all the considered cases.

\begin{figure}[ht]
  \begin{center}
  \includegraphics[width=0.49\textwidth]{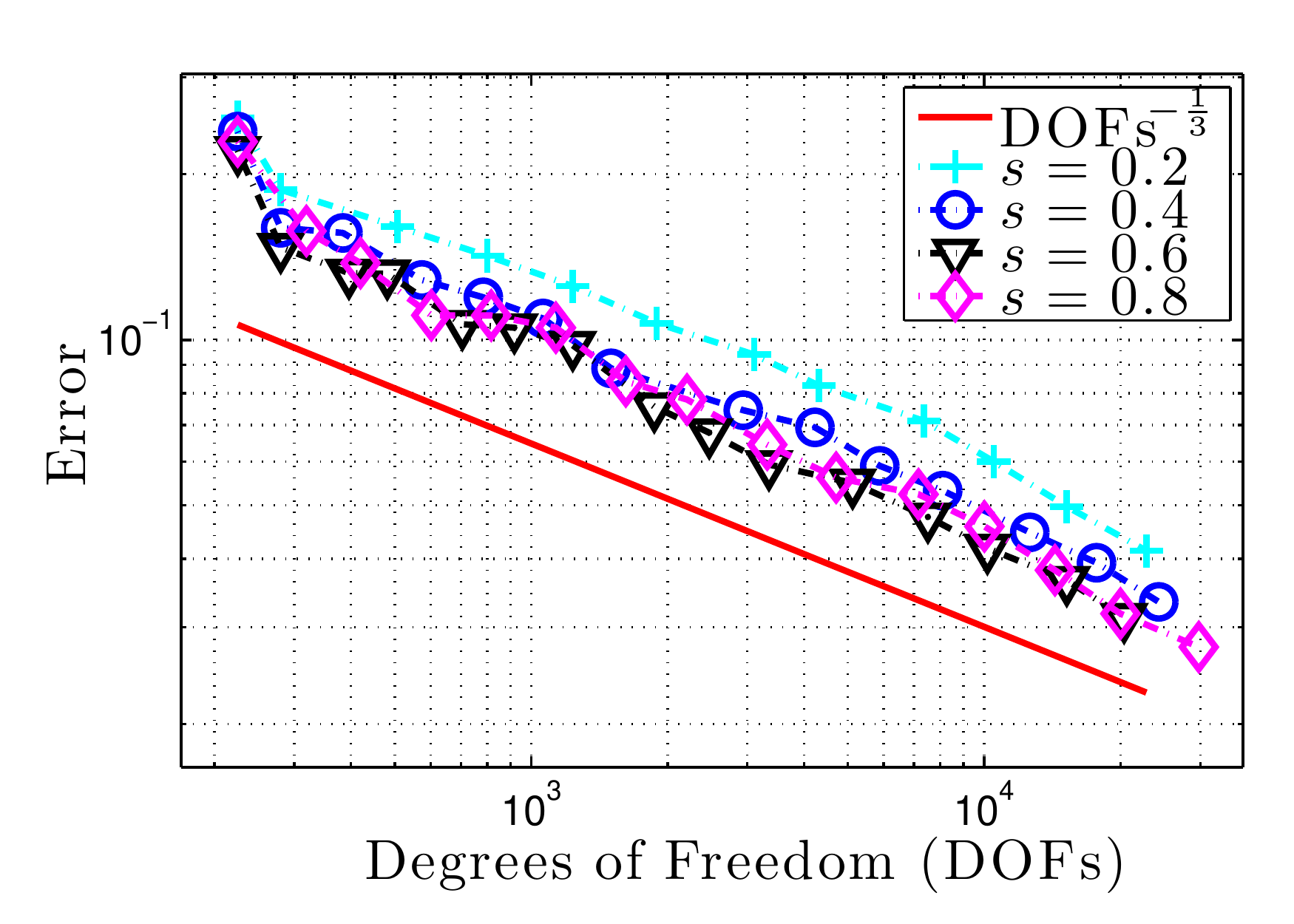}
  \hfil
   \includegraphics[width=0.49\textwidth]{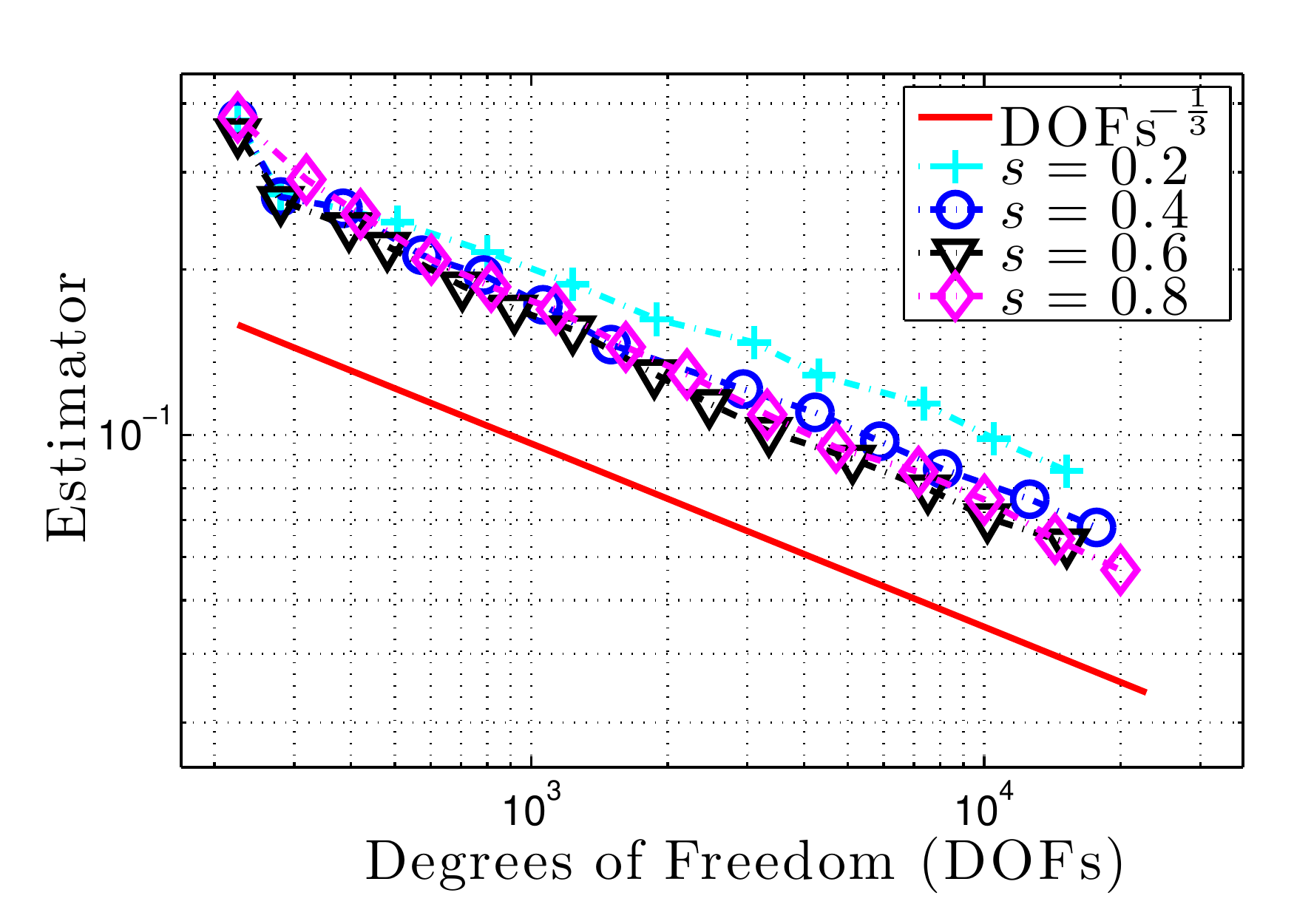}
  \end{center}
  \caption{Computational rate of convergence for our anisotropic AFEM on the problem with 
discontinuous coefficients described in \S~\ref{sub:disccoeff} for $n=2$ and $s=0.2$, $0.4$, $0.6$ and $s=0.8$. 
The left panel shows the decrease of the error with respect to the number of degrees of freedom, whereas the 
right one that for the total error indicator.
In all cases we recover the optimal rate $\# (\T_{\Y})^{-1/3}$.
As the exact solution of this problem is not known, we compute the rate of convergence by 
comparing the computed solution with one obtained on a very fine mesh. 
The aspect ratios, averaged over $x'$, of the cells on the bottom layer $[0,y_1]$ in the finest mesh are: $3.74\times 10^{11}$, 
$1.21\times 10^{5}$, $719$ and $69.1$, respectively.
The average effectivity indices are $1.55$, $1.64$, $1.69$, $1.71$, respectively.}
\label{fig:Kellog}
\end{figure}

\subsection{The role of oscillation} 
\label{sub:osc}
Let us explore the role of oscillation as discussed in Remark~\ref{remark:osc}. 
We consider the same example as in \S~\ref{sub:Lshapedincompatible}, 
but we set in Definition~\ref{def:discrete_spaces} $\mathcal{P}_2(K) = \mathbb{P}_2(K)$. In this case the discrete local space $\mathcal{W}(\C_{z'})$ does not have enough degrees of freedom to impose \eqref{eq:def2zint}. Consequently, in order to have \eqref{global_upper_bound} the oscillation \eqref{eq:defoflocosc} must be supplemented by the term
\begin{equation}
\label{osc_V}
\osc_{z'}(V_{\T_\Y}) = \| y^\alpha \GRAD V_{\T_\Y} - \bsigma_{z'} \|_{L^2(\C_{z'},y^{-\alpha})},
\end{equation}
where $\bsigma_{z'}$ is the local average of $y^\alpha \GRAD V_{\T_\Y}$.

\begin{figure}[ht]
  \begin{center}
   \includegraphics[width=0.49\textwidth]{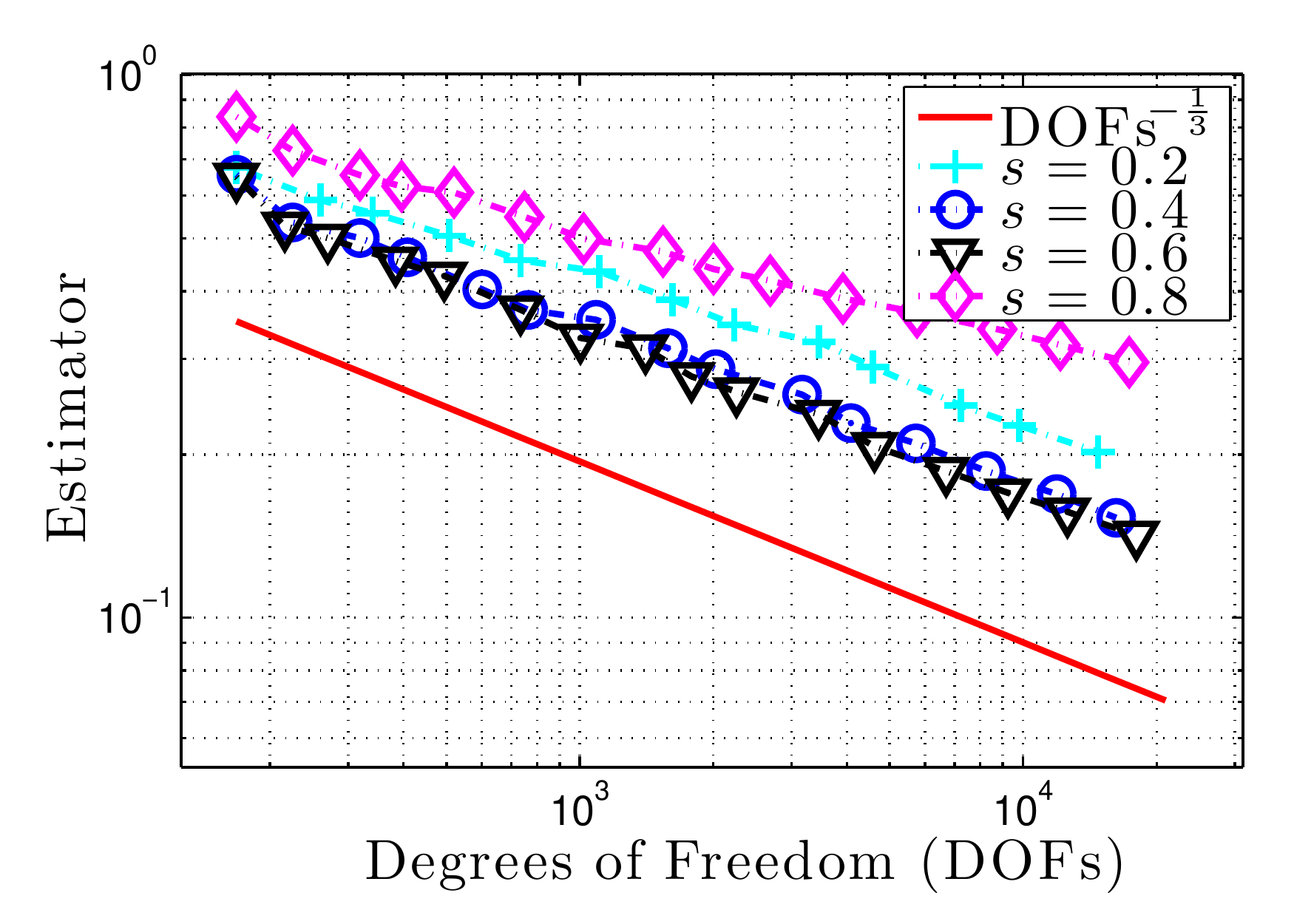}
   \hfil
   \includegraphics[width=0.49\textwidth]{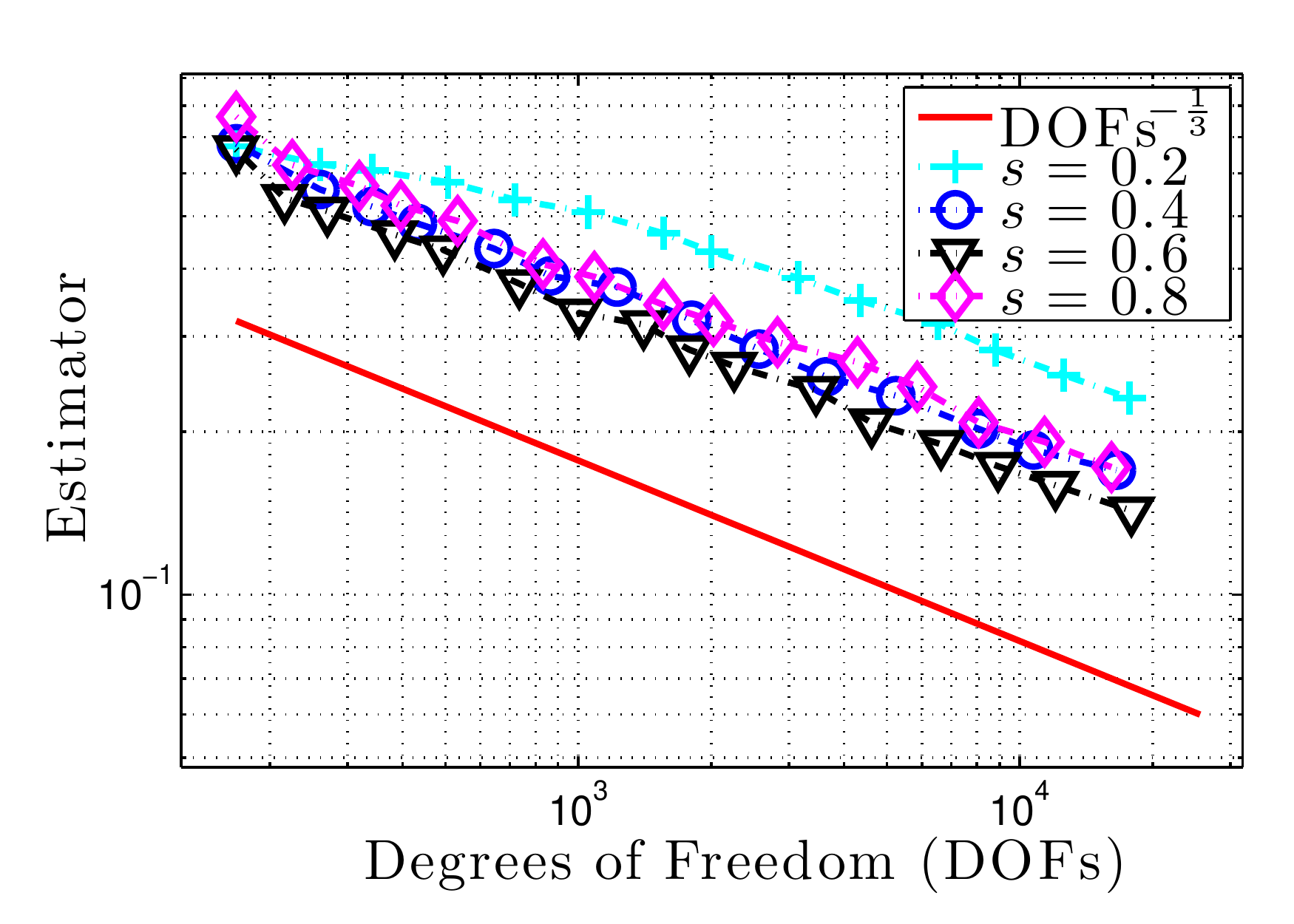}
  \end{center}
\caption{Experimental rate of convergence for the example of
  \S~\ref{sub:Lshapedincompatible} but with $\mathcal{P}_2(K) =
  \mathbb{P}_2(K)$ in Definition~\ref{def:discrete_spaces}. In this
  case, the oscillation \eqref{eq:defoflocosc} must be supplemented by
  \eqref{osc_V} to attain an upper bound. The expression
  \eqref{osc_V} possesses a singularity and weight which cannot be
  captured with the mesh grading necessary for optimal approximation
  orders; this yields suboptimal decay rates (left figure).
  However, a graded mesh with $\gamma > 3/(1-|\alpha|)$ in
  \eqref{graded_mesh} gives optimal decay rates (right figure).}
\label{fig:osc}
\end{figure}

Figure~\ref{fig:osc} shows the experimental rates of convergence
obtained by our AFEM for the total error where the oscillation term
\eqref{eq:defoflocosc} is supplemented with \eqref{osc_V}. As we can
see, especially for $s=0.8$, the results are not optimal. To remedy
this we notice that a graded mesh is able to capture the singular
behavior of \eqref{osc_V}. Indeed, the underlying weight in this
expression is $y^{-\alpha}$, so that a grading parameter in
\eqref{graded_mesh} of $\gamma > 3/(1-|\alpha|)$ would yield an
optimal decay for \eqref{osc_V}. For this reason, setting
$\gamma>3/(1-|\alpha|)$, we are able to obtain an optimal decay rates
for both the error and the oscillation \eqref{osc_V} and
all values of $s$. The results shown in Figure~\ref{fig:osc} confirm this.
\bibliographystyle{plain}
\bibliography{biblio}

\end{document}